\newtheorem{thm}{Theorem}
\newtheorem{lemma}{Lemma}
\newtheorem*{rmk}{Remark}
\renewcommand{\mod}[1]{\ (\text{mod}\ #1)}
\title{On a Divisor of the Central Binomial Coefficient}
\author{Matthew Just and Maxwell Schneider}
\date{}
\begin{document}

\maketitle

\begin{abstract}
   It is well known that for all $n\geq 1$ the number $n+1$ is a divisor of the central binomial coefficient ${2n\choose n}$. Since the $n$th central binomial coefficient equals the number of lattice paths from $(0,0)$ to $(n,n)$ by unit steps north or east, a natural question is whether there is a way to partition these paths into sets of $n+1$ paths or $n+1$ equinumerous sets of paths. The Chung--Feller theorem gives an elegant answer to this question. We pose and deliver an answer to the analogous question for $2n-1$, another divisor of ${2n\choose n}$. We then show our main result follows from a more general observation regarding binomial coefficients ${n\choose k}$ with $n$ and $k$ relatively prime. A discussion of the case where $n$ and $k$ are not relatively prime is also given, highlighting the limitations of our methods. Finally, we come full circle and give a novel interpretation of the Catalan numbers.
\end{abstract}


\section{Introduction}

The central binomial coefficients, defined for $n\geq 1$ by \[{2n\choose n} = \frac{(2n)!}{(n!)^2},\] have many far reaching applications. Perhaps the most famous is Erd\H{o}s' elementary proof of Bertrand's postulate \cite{erdos1932beweis}. From a combinatorial perspective, the central binomial coefficient is equal to the number of lattice paths from $(0,0)$ to $(n,n)$ by taking one of the two steps in the set $\{ (1,0),(0,1)\}$. We will refer to these paths as \textit{northeastern lattice paths}, or simply lattice paths. One such path is illustrated in Figure \ref{northeasternlatticepath} for $n=4$.

\begin{figure}[h]
\begin{center}
    \begin{tikzpicture}[scale=.5]
        \draw[step=1,thin] (0,0) node[anchor=north east] {} grid (4,4)node[anchor=south west] {};
        \draw[ultra thick,red] (0,0) -- (1,0) -- (1,3)--(3,3)--(3,4)--(4,4);
    \end{tikzpicture}
\end{center}
\caption{A northeastern lattice path from $(0,0)$ to $(4,4)$ formed by taking unit steps north or east. The number of such paths is ${8\choose 4} = 70$.}
\label{northeasternlatticepath}
\end{figure}
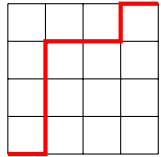

It was conjectured by Erd\H{o}s that ${2n\choose n}$ is squarefree for all $n>4$. This conjecture was proved for every sufficiently large $n$ by S\'ark\"ozy \cite{sarkozy1985divisors}, and subsequently proved for $n>4$ by Granville and Ramar\'e \cite{granville1996explicit}.

One look at the identity \[\frac{1}{n+1} {2n\choose n} = {2n\choose n} - {2n\choose n+1}\] shows that $(n+1)\mid {2n\choose n}$ for all $n\geq 0$. The numbers \[C_n = \frac{1}{n+1} {2n\choose n}\] defined for $n\geq 0$ are referred to as the Catalan numbers \cite{stanley2011enumerative}. Though described by Euler and Catalan in relation to parenthesized expressions and dissection of polygons, the Catalan numbers can be traced back to the Mongolian mathematician Minggatu, who used them to express certain trigonometric series. A detailed historical account is given by Larcombe \cite{larcombe199918th}, who cites a 1988 paper of Luo \cite{luo1988first} giving Minggatu the rightful credit. 

In the context of northeastern lattice paths, the $n$th Catalan number equals the number of paths which lie below the main diagonal. Such paths are referred to as \textit{Dyck paths}. One such path is illustrated in Figure \ref{belowdiagonal}, again for $n=4$.

\begin{figure}[h]
\begin{center}
    \begin{tikzpicture}[scale=.5]
        \draw[step=1,thin] (0,0) node[anchor=north east] {} grid (4,4)node[anchor=south west] {};
        \draw[thick] (0,0) -- (4,4);
        \draw[ultra thick,red] (0,0) -- (2,0) -- (2,2)--(3,2)--(3,3)--(4,3)--(4,4);
    \end{tikzpicture}
\end{center}
\caption{A northeastern lattice path from $(0,0)$ to $(4,4)$ for which all steps lie below the main diagonal. The number of such paths is given by the Catalan number $C_4=14$.}
\label{belowdiagonal}
\end{figure}
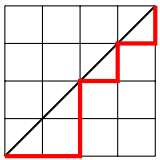

While this gives a combinatorial proof that $(n+1)\mid {2n\choose n}$, it does not fully explain how to partition the set of all ${2n\choose n}$ paths into subsets of equal cardinality. Observe that the number of vertical steps $(0,1)$ that can be taken above the main diagonal can be any number between 0 and $n$. The Chung--Feller theorem \cite{feller2015fluctuations}, originally stated in terms of coin flips, implies that for any $j$, $0\leq j \leq n$, the number of paths that have $j$ vertical steps above the main diagonal is independent of $j$, implying there are $C_n$ paths in each one of these $n+1$ equinumerous sets of paths. An example of this partitioning is shown in Figure \ref{excedence}.
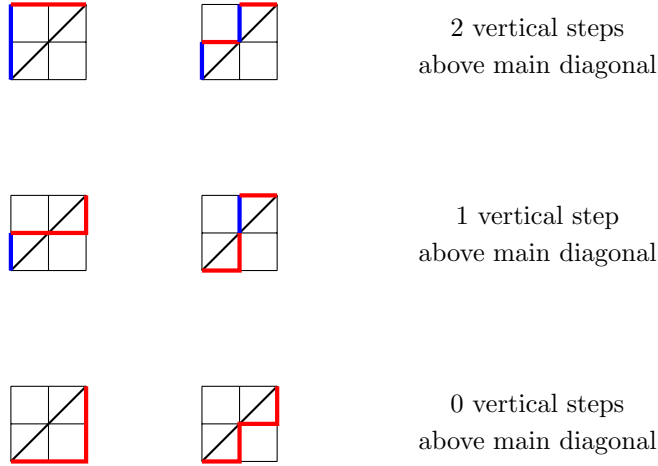
\begin{figure}[h]
\begin{center}
    \begin{tikzpicture}[scale=.5]
        \begin{scope}
            \draw[step=1,thin] (0,0) node[anchor=north east] {} grid (2,2)node[anchor=south west] {};
        \draw[thick] (0,0) -- (2,2);
        \draw[ultra thick,red] (0,0) -- (2,0) -- (2,2);
        \end{scope}
        \begin{scope}[xshift=2in]
            \draw[step=1,thin] (0,0) node[anchor=north east] {} grid (2,2)node[anchor=south west] {};
        \draw[thick] (0,0) -- (2,2);
        \draw[ultra thick,red] (0,0) -- (1,0) -- (1,1)--(2,1)--(2,2);
        \end{scope}
        \begin{scope}[yshift=2in]
            \draw[step=1,thin] (0,0) node[anchor=north east] {} grid (2,2)node[anchor=south west] {};
        \draw[thick] (0,0) -- (2,2);
        \draw[ultra thick,blue] (0,0) -- (0,1);
        \draw[ultra thick, red] (0,1) -- (2,1)--(2,2);
        \end{scope}
        \begin{scope}[xshift=2in,yshift=2in]
            \draw[step=1,thin] (0,0) node[anchor=north east] {} grid (2,2)node[anchor=south west] {};
        \draw[thick] (0,0) -- (2,2);
        \draw[ultra thick,red] (0,0) -- (1,0) -- (1,1);
        \draw[ultra thick,blue](1,1)--(1,2);
        \draw[ultra thick,red](1,2)--(2,2);
        \end{scope}
        \begin{scope}[yshift=4in]
            \draw[step=1,thin] (0,0) node[anchor=north east] {} grid (2,2)node[anchor=south west] {};
        \draw[thick] (0,0) -- (2,2);
        \draw[ultra thick,blue] (0,0)--(0,2);
        \draw[ultra thick,red] (0,2) -- (2,2);
        \end{scope}
        \begin{scope}[xshift=2in, yshift=4in]
            \draw[step=1,thin] (0,0) node[anchor=north east] {} grid (2,2)node[anchor=south west] {};
        \draw[thick] (0,0) -- (2,2);
        \draw[ultra thick,blue] (0,0) -- (0,1);
        \draw[ultra thick,red] (0,1) -- (1,1);
        \draw[ultra thick,blue] (1,1) -- (1,2);
        \draw[ultra thick,red] (1,2) -- (2,2);
        \end{scope}
        \node[align=center] at (14,1.5){0 vertical steps};
        \node[align=center] at (14,.5){above main diagonal};
        \node[align=center] at (14,6.5){1 vertical step};
        \node[align=center] at (14,5.5){above main diagonal};
        \node[align=center] at (14,11.5){2 vertical steps};
        \node[align=center] at (14,10.5){above main diagonal};
    \end{tikzpicture}
\end{center}
\caption{The ${4\choose 2}=6$ northeastern lattice paths from $(0,0)$ to $(2,2)$ partitioned into equinumerous sets by the number of northern steps taken above the main diagonal. }
\label{excedence}
\end{figure}
Labelle and Yeh \cite{labelle1990generalized} give a generalized proof of this result.

A lesser known divisor of ${2n\choose n}$ is $2n-1$, for all $n\geq 1$. Similarly to $n+1$, this is not obvious until you take one glance at the carefully crafted identity 
\begin{align}
    {2n\choose n} &= 2\cdot (2n-1) \cdot \frac{1}{n} {2(n-1)\choose n-1}. \label{keyid}
\end{align}
Note we are using the fact that the Catalan number $C_{n-1}$ is an integer.  For any fixed $k$, the set of natural numbers $n$ such that
$n + k$ divides ${2n\choose n}$ has been studied by Pomerance \cite{pomerance2015divisors}. Further improvements in this direction are given by Sanna \cite{sanna2018central}, as well as Ford and Konyagin \cite{ford2020divisibility}.

Motivated by the Chung--Feller theorem we ask the question as to whether there is a partitioning of the ${2n\choose n}$ lattice paths into $2n-1$ sets with the same cardinality. Our study proved fruitful while investigating the area enclosed by the lattice paths. Figure \ref{area} shows an example of this enclosed area by a path.

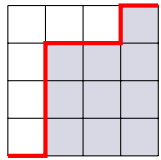
\begin{figure}[h]
\begin{center}
    \begin{tikzpicture}[scale=.5]
        \fill[blue!50!olive,opacity=.2] (1,0)--(1,3)--(3,3)--(3,4)--(4,4)--(4,0)--cycle;
        \draw[step=1,thin] (0,0) node[anchor=north east] {} grid (4,4)node[anchor=south west] {};
        \draw[ultra thick,red] (0,0) -- (1,0) -- (1,3)--(3,3)--(3,4)--(4,4);
    \end{tikzpicture}
\end{center}
\caption{A northeastern lattice path from $(0,0)$ to $(4,4)$ enclosing an area of 10.}
\label{area}
\end{figure}

Computational data suggested to the authors that if we took each northeastern lattice path from $(0,0)$ to $(n,n)$ and reduced the enclosed area modulo $2n-1$, the ${2n\choose n}$ paths were partitioned equally amongst the $2n-1$ residue classes. Figure \ref{part} shows this observation for $n=3$.

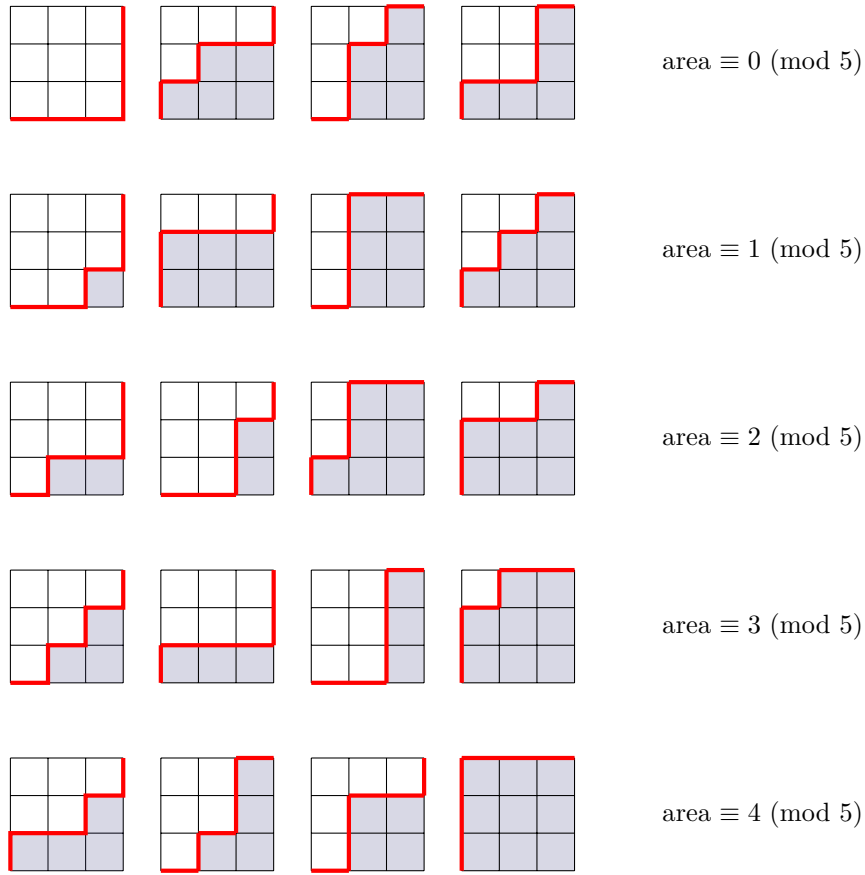
\begin{figure}[h]
\begin{center}
    \begin{tikzpicture}[scale=.5]
        \begin{scope}[yshift=20cm]
            \draw[step=1,thin] (0,0) node[anchor=north east] {} grid (3,3)node[anchor=south west] {};
        
        \draw[ultra thick,red] (0,0) -- (3,0) -- (3,3);
        \end{scope}
        \begin{scope}[xshift=4cm,yshift=20cm]
        \fill[blue!50!olive,opacity=.2] (0,0)--(0,1)--(1,1)--(1,2)--(3,2)--(3,0)--cycle;
            \draw[step=1,thin] (0,0) node[anchor=north east] {} grid (3,3)node[anchor=south west] {};
        
        \draw[ultra thick,red] (0,0) -- (0,1);
        \draw[ultra thick,red] (0,1) -- (1,1);
        \draw[ultra thick,red] (1,1) -- (1,2);
        \draw[ultra thick,red] (1,2) -- (3,2);
        \draw[ultra thick,red] (3,2) -- (3,3);
        \end{scope}
        \begin{scope}[xshift=8cm,yshift=20cm]
        \fill[blue!50!olive,opacity=.2] (0,0)--(1,0)--(1,2)--(2,2)--(2,3)--(3,3)--(3,0)--cycle;
            \draw[step=1,thin] (0,0) node[anchor=north east] {} grid (3,3)node[anchor=south west] {};
        
        \draw[ultra thick,red] (0,0) -- (1,0);
        \draw[ultra thick,red] (1,0) -- (1,2);
        \draw[ultra thick,red] (1,2) -- (2,2);
        \draw[ultra thick,red] (2,2) -- (2,3);
        \draw[ultra thick,red] (2,3) -- (3,3);
        \end{scope}
        \begin{scope}[xshift=12cm,yshift=20cm]
        \fill[blue!50!olive,opacity=.2] (0,0)--(0,1)--(2,1)--(2,3)--(3,3)--(3,0)--cycle;
            \draw[step=1,thin] (0,0) node[anchor=north east] {} grid (3,3)node[anchor=south west] {};
        
        \draw[ultra thick,red] (0,0) -- (0,1);
        \draw[ultra thick,red] (0,1) -- (2,1);
        \draw[ultra thick,red] (2,1) -- (2,3);
        \draw[ultra thick,red] (2,3) -- (3,3);
        \end{scope}
        
        \begin{scope}[yshift=15cm]
            \draw[step=1,thin] (0,0) node[anchor=north east] {} grid (3,3)node[anchor=south west] {};
            \fill[blue!50!olive,opacity=.2] (2,0)--(2,1)--(3,1)--(3,0)--cycle;
        
        \draw[ultra thick,red] (0,0) -- (2,0) -- (2,1)--(3,1)--(3,3);
        \end{scope}
        \begin{scope}[xshift=4cm,yshift=15cm]
        \fill[blue!50!olive,opacity=.2] (0,0)--(0,2)--(3,2)--(3,0)--cycle;
            \draw[step=1,thin] (0,0) node[anchor=north east] {} grid (3,3)node[anchor=south west] {};
        
        \draw[ultra thick,red] (0,0) -- (0,2);
        \draw[ultra thick,red] (0,2) -- (3,2);
        \draw[ultra thick,red] (3,2) -- (3,3);
        \end{scope}
        \begin{scope}[xshift=8cm,yshift=15cm]
        \fill[blue!50!olive,opacity=.2] (0,0)--(1,0)--(1,3)--(3,3)--(3,0)--cycle;
            \draw[step=1,thin] (0,0) node[anchor=north east] {} grid (3,3)node[anchor=south west] {};
        
        \draw[ultra thick,red] (0,0) -- (1,0);
        \draw[ultra thick,red] (1,0) -- (1,3);
        \draw[ultra thick,red] (1,3) -- (3,3);
        \end{scope}
        \begin{scope}[xshift=12cm,yshift=15cm]
        \fill[blue!50!olive,opacity=.2] (0,0)--(0,1)--(1,1)--(1,2)--(2,2)--(2,3)--(3,3)--(3,0)--cycle;
            \draw[step=1,thin] (0,0) node[anchor=north east] {} grid (3,3)node[anchor=south west] {};
        
        \draw[ultra thick,red] (0,0) -- (0,1);
        \draw[ultra thick,red] (0,1) -- (1,1);
        \draw[ultra thick,red] (1,1) -- (1,2);
        \draw[ultra thick,red] (1,2) -- (2,2);
        \draw[ultra thick,red] (2,2) -- (2,3);
        \draw[ultra thick,red] (2,3) -- (3,3);
        \end{scope}
        
        \begin{scope}[yshift=10cm]
            \draw[step=1,thin] (0,0) node[anchor=north east] {} grid (3,3)node[anchor=south west] {};
            \fill[blue!50!olive,opacity=.2] (0,0)--(1,0)--(1,1)--(3,1)--(3,0)--cycle;
        
        \draw[ultra thick,red] (0,0) -- (1,0) -- (1,1)--(3,1)--(3,3);
        \end{scope}
        \begin{scope}[xshift=4cm,yshift=10cm]
        \fill[blue!50!olive,opacity=.2] (0,0)--(2,0)--(2,2)--(3,2)--(3,0)--cycle;
            \draw[step=1,thin] (0,0) node[anchor=north east] {} grid (3,3)node[anchor=south west] {};
        
        \draw[ultra thick,red] (0,0) -- (2,0);
        \draw[ultra thick,red] (2,0) -- (2,2);
        \draw[ultra thick,red] (2,2) -- (3,2);
        \draw[ultra thick,red] (3,2) -- (3,3);
        \end{scope}
        \begin{scope}[xshift=8cm,yshift=10cm]
        \fill[blue!50!olive,opacity=.2] (0,0)--(0,1)--(1,1)--(1,3)--(3,3)--(3,0)--cycle;
            \draw[step=1,thin] (0,0) node[anchor=north east] {} grid (3,3)node[anchor=south west] {};
        
        \draw[ultra thick,red] (0,0) -- (0,1);
        \draw[ultra thick,red] (0,1) -- (1,1);
        \draw[ultra thick,red] (1,1) -- (1,3);
        \draw[ultra thick,red] (1,3) -- (3,3);
        \end{scope}
        \begin{scope}[xshift=12cm,yshift=10cm]
        \fill[blue!50!olive,opacity=.2] (0,0)--(0,2)--(2,2)--(2,3)--(3,3)--(3,0)--cycle;
            \draw[step=1,thin] (0,0) node[anchor=north east] {} grid (3,3)node[anchor=south west] {};
        
        \draw[ultra thick,red] (0,0) -- (0,2);
        \draw[ultra thick,red] (0,2) -- (2,2);
        \draw[ultra thick,red] (2,2) -- (2,3);
        \draw[ultra thick,red] (2,3) -- (3,3);
        \end{scope}
        
        \begin{scope}[yshift=5cm]
            \draw[step=1,thin] (0,0) node[anchor=north east] {} grid (3,3)node[anchor=south west] {};
             \fill[blue!50!olive,opacity=.2] (1,0)--(1,1)--(2,1)--(2,2)--(3,2)--(3,3)--(3,0)--cycle;
        
        \draw[ultra thick,red] (0,0) -- (1,0) -- (1,1)--(2,1)--(2,2)--(3,2)--(3,3);
        \end{scope}
        \begin{scope}[xshift=4cm,yshift=5cm]
        \fill[blue!50!olive,opacity=.2] (0,0)--(0,1)--(3,1)--(3,0)--cycle;
            \draw[step=1,thin] (0,0) node[anchor=north east] {} grid (3,3)node[anchor=south west] {};
        
        \draw[ultra thick,red] (0,0) -- (0,1);
        \draw[ultra thick,red] (0,1) -- (3,1);
        \draw[ultra thick,red] (3,1) -- (3,3);
        \end{scope}
        \begin{scope}[xshift=8cm,yshift=5cm]
        \fill[blue!50!olive,opacity=.2] (2,0)--(2,3)--(3,3)--(3,0)--cycle;
            \draw[step=1,thin] (0,0) node[anchor=north east] {} grid (3,3)node[anchor=south west] {};
        
        \draw[ultra thick,red] (0,0) -- (2,0);
        \draw[ultra thick,red] (2,0) -- (2,3);
        \draw[ultra thick,red] (2,3) -- (3,3);
        \end{scope}
        \begin{scope}[xshift=12cm,yshift=5cm]
        \fill[blue!50!olive,opacity=.2] (0,0)--(0,2)--(1,2)--(1,3)--(3,3)--(3,0)--cycle;
            \draw[step=1,thin] (0,0) node[anchor=north east] {} grid (3,3)node[anchor=south west] {};
        
        \draw[ultra thick,red] (0,0) -- (0,2);
        \draw[ultra thick,red] (0,2) -- (1,2);
        \draw[ultra thick,red] (1,2) -- (1,3);
        \draw[ultra thick,red] (1,3) -- (3,3);
        \end{scope}
        
        \begin{scope}[yshift=0cm]
            \draw[step=1,thin] (0,0) node[anchor=north east] {} grid (3,3)node[anchor=south west] {};
            \fill[blue!50!olive,opacity=.2] (0,0)--(0,1)--(2,1)--(2,2)--(3,2)--(3,0)--cycle;
        
        \draw[ultra thick,red] (0,0) -- (0,1) -- (2,1)--(2,2)--(3,2)--(3,3);
        \end{scope}
        \begin{scope}[xshift=4cm,yshift=0cm]
        \fill[blue!50!olive,opacity=.2] (0,0)--(1,0)--(1,1)--(2,1)--(2,3)--(3,3)--(3,0)--cycle;
            \draw[step=1,thin] (0,0) node[anchor=north east] {} grid (3,3)node[anchor=south west] {};
        
        \draw[ultra thick,red] (0,0) -- (1,0);
        \draw[ultra thick,red] (1,0) -- (1,1);
        \draw[ultra thick,red] (1,1) -- (2,1);
        \draw[ultra thick,red] (2,1) -- (2,3);
        \draw[ultra thick,red] (2,3) -- (3,3);
        \end{scope}
        \begin{scope}[xshift=8cm,yshift=0cm]
        \fill[blue!50!olive,opacity=.2] (0,0)--(1,0)--(1,2)--(3,2)--(3,3)--(3,0)--cycle;
            \draw[step=1,thin] (0,0) node[anchor=north east] {} grid (3,3)node[anchor=south west] {};
        
        \draw[ultra thick,red] (0,0) -- (1,0);
        \draw[ultra thick,red] (1,0) -- (1,2);
        \draw[ultra thick,red] (1,2) -- (3,2);
        \draw[ultra thick,red] (3,2) -- (3,3);
        \end{scope}
        \begin{scope}[xshift=12cm,yshift=0cm]
        \fill[blue!50!olive,opacity=.2] (0,0)--(0,3)--(3,3)--(3,0)--cycle;
            \draw[step=1,thin] (0,0) node[anchor=north east] {} grid (3,3)node[anchor=south west] {};
        
        \draw[ultra thick,red] (0,0) -- (0,3);
        \draw[ultra thick,red] (0,3) -- (3,3);
        \end{scope}

        \node[align=center] at (20,21.5){area $\equiv 0\mod{5}$};
        \node[align=center] at (20,16.5){area $\equiv 1\mod{5}$};
        \node[align=center] at (20,11.5){area $\equiv 2\mod{5}$};
        \node[align=center] at (20,6.5){area $\equiv 3\mod{5}$};
        \node[align=center] at (20,1.5){area $\equiv 4\mod{5}$};
    \end{tikzpicture}
\end{center}
\caption{The ${6\choose 3}=20$ northeastern lattice paths from $(0,0)$ to $(3,3)$ partitioned by the area they enclose modulo 5.}
\label{part}
\end{figure}

The proof of this observation in general encompasses our main result.

\begin{thm}
    Let $n\geq 1$ and $0\leq j < 2n-1$. Then the number of northeastern lattice paths from $(0,0)$ to $(n,n)$ enclosing an area congruent to $j$ modulo $2n-1$ is equal to $\frac{1}{2n-1}{2n\choose n}$, which in particular is independent from $j$.
\end{thm}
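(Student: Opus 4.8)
The plan is to package the enclosed area into a generating function, recognize it as a Gaussian binomial coefficient, and turn the asserted equidistribution into a divisibility of polynomials that a short cyclotomic computation settles. First I would record the area formula. Encode a northeastern lattice path $\pi$ from $(0,0)$ to $(n,n)$ by the weakly increasing sequence $0\le h_1\le\cdots\le h_n\le n$, where $h_i$ is the height at which $\pi$ takes its $i$th eastward step; one checks at once that the enclosed area is $A(\pi)=h_1+\cdots+h_n$ (in Figure~\ref{area} this reads $0+3+3+4=10$). Since such sequences are precisely the partitions fitting inside an $n\times n$ box, the standard combinatorial meaning of the $q$-binomial coefficient gives
\[\sum_\pi q^{A(\pi)}={2n\choose n}_q:=\prod_{i=1}^{n}\frac{1-q^{\,n+i}}{1-q^{\,i}},\]
the sum ranging over all such paths.

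Next I would reduce the theorem to a polynomial statement. Let $c_j$ be the number of paths with $A(\pi)\equiv j\mod{2n-1}$. Reducing $\sum_\pi q^{A(\pi)}$ modulo $q^{2n-1}-1$ yields $\sum_{j=0}^{2n-2}c_jq^{j}$, and the $c_j$ are all equal exactly when this polynomial is a scalar multiple of $S(q):=1+q+\cdots+q^{2n-2}=(q^{2n-1}-1)/(q-1)$. Hence it suffices to show $S(q)\mid{2n\choose n}_q$ as polynomials: writing ${2n\choose n}_q=S(q)\,g(q)$ and using $S(q)\,q^{k}\equiv S(q)\mod{q^{2n-1}-1}$ (which follows from $S(q)(q-1)=q^{2n-1}-1$), one obtains ${2n\choose n}_q\equiv g(1)\,S(q)\mod{q^{2n-1}-1}$, where $g(1)$ is the value of ${2n\choose n}_q$ at $q=1$ divided by $S(1)$, namely $\frac{1}{2n-1}{2n\choose n}$; this is exactly the assertion.

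The core of the argument is the divisibility. The cyclotomic factors of $S(q)$ are exactly the $\Phi_d(q)$ with $d>1$ and $d\mid 2n-1$, each to the first power. From ${2n\choose n}_q\prod_{i=1}^n(1-q^{i})=\prod_{i=1}^n(1-q^{\,n+i})$ and the fact that $\Phi_d(q)$ divides $1-q^{m}$ precisely when $d\mid m$ (and then only simply, since $1-q^m$ is squarefree), the multiplicity of $\Phi_d$ in ${2n\choose n}_q$ equals
\[\#\{\,n<m\le 2n:\ d\mid m\,\}-\#\{\,0<i\le n:\ d\mid i\,\}=\Big\lfloor\tfrac{2n}{d}\Big\rfloor-2\Big\lfloor\tfrac{n}{d}\Big\rfloor.\]
Since $d\mid 2n-1$ makes $d$ odd and forces $2n\equiv 1$ and $n\equiv\tfrac{d+1}{2}\mod d$, a one-line computation gives $\lfloor 2n/d\rfloor=(2n-1)/d$ and $2\lfloor n/d\rfloor=(2n-1)/d-1$, so the multiplicity is $1$. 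Thus every such $\Phi_d$ divides ${2n\choose n}_q$, hence so does $S(q)$.

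I expect the main obstacle to be conceptual, not computational: the decisive steps are seeing that the area is governed by ${2n\choose n}_q$ and that equidistribution modulo $2n-1$ is the polynomial divisibility $S(q)\mid{2n\choose n}_q$; after that the cyclotomic bookkeeping is routine, the one point deserving care being that every divisor of $2n-1$ is odd, so that $2^{-1}\equiv\tfrac{d+1}{2}\ge 2\mod d$ and hence the residue $n\bmod d$ is never $0$ or $1$. The same scheme should yield the promised generalization: when $\gcd(n,k)=1$, the multiplicity of $\Phi_d$ in ${n\choose k}_q$ is $1$ for every $d>1$ dividing $n$, so the enclosed area is equidistributed modulo $n$ among the ${n\choose k}$ paths to $(k,n-k)$. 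Alternatively, the multiplicity count is immediate from the $q$-Lucas theorem, which evaluates ${2n\choose n}_q$ at a primitive $d$th root of unity $\zeta$ as ${\lfloor 2n/d\rfloor\choose\lfloor n/d\rfloor}{1\choose (d+1)/2}_\zeta=0$.
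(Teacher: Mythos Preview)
Your proof is correct and follows the same overall architecture as the paper's first proof: interpret the area generating function as the Gaussian binomial ${2n\brack n}_q$, reduce equidistribution modulo $2n-1$ to the polynomial divisibility $[2n-1]_q\mid{2n\brack n}_q$, and then verify that divisibility. The only substantive divergence is in that last step. The paper derives an explicit $q$-analogue of the identity ${2n\choose n}=2(2n-1)C_{n-1}$, namely
\[{2n\brack n}_q=(1+q^n)\cdot[2n-1]_q\cdot\frac{1}{[n]_q}{2(n-1)\brack n-1}_q,\]
and then checks that the rightmost factor is a polynomial (it is the $q$-Catalan number $C_{n-1}(q)$). You instead count cyclotomic multiplicities: for each $d>1$ dividing $2n-1$ you compute $\lfloor 2n/d\rfloor-2\lfloor n/d\rfloor=1$ directly. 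Your route is more systematic and, as you note, immediately yields the generalization to ${n\brack k}_q$ with $\gcd(n,k)=1$; the paper's route has the advantage of exhibiting the quotient polynomial explicitly and mirroring the arithmetic identity that motivated the theorem.
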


We will give two proofs; in Section 2 we will use $q$-analogues of the binomial coefficients and in Section 3  we will give a combinatorial proof. We then follow up with several equivalent versions of Theorem 1 in Section 3. We end by giving some generalizations of Theorem 1 in Section 4, including applications to products of integers and the Catalan numbers.

\section{Proof of Theorem 1}

We break the proof into three parts. First we will discuss the key lemma involving polynomial division, then we will give a brief introduction to $q$-analogues of numbers, and finally we will put the pieces together.

\subsection{Key lemma}

We start with a polynomial $f(q)$ with integer coefficients, i.e. an element of the polynomial ring $\mathbb{Z}[q]$ \[f(q) = \sum_{i=0}^n a_i q^i.\]
Now for any $m\geq 1$ and $0\leq k <m$ we define the following coefficient sum of $f$
\[b(k,m;f)=\sum_{i\equiv k(m)} a_i\] where the summation is taken over those $0\leq i \leq n$ such that $i\equiv k\mod{m}$. We will then say that $f$ has \textit{equal content modulo $m$} if \[b(k,m;f)=\frac{f(1)}{m}\] for all $k$. The following lemma gives an equivalent characterization for a polynomial $f$ to have equal content modulo $m$ in terms of divisibility of $f$ by a special polynomial. We will denote by $[m]_q$ the polynomial $1+q+q^2+\ldots+q^{m-1}$, often referred to as the $q$-analogue of $m$.

\begin{lemma}
    Let $f(q)$ be a polynomial with integer coefficients and $m\geq1$. Then $f(q)$ has equal content modulo $m$ if and only if the polynomial $[m]_q$ divides $f(q)$ as elements of $\mathbb{Z}[q]$.
\end{lemma}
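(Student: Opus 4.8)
The plan is to work with the factorization of $[m]_q$ over $\mathbb{C}[q]$ in terms of roots of unity, and to translate the divisibility statement into a system of linear conditions on the coefficient sums $b(k,m;f)$. Since $[m]_q = \frac{q^m-1}{q-1}$, its roots are exactly the primitive and non-primitive $m$th roots of unity other than $1$; equivalently, $[m]_q$ divides $f(q)$ in $\mathbb{Z}[q]$ if and only if $f(\zeta) = 0$ for every $m$th root of unity $\zeta \neq 1$. (The passage from divisibility in $\mathbb{C}[q]$ to divisibility in $\mathbb{Z}[q]$ is harmless here because $[m]_q$ is monic, so the quotient $f(q)/[m]_q$, if it lies in $\mathbb{C}[q]$, automatically has integer coefficients by the division algorithm.)

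Next I would evaluate $f$ at an $m$th root of unity in terms of the coefficient sums. Writing $\zeta = \omega^t$ where $\omega = e^{2\pi i/m}$, we have
\[
f(\omega^t) = \sum_{i=0}^n a_i \omega^{ti} = \sum_{k=0}^{m-1} \left( \sum_{i \equiv k\ (m)} a_i \right) \omega^{tk} = \sum_{k=0}^{m-1} b(k,m;f)\, \omega^{tk},
\]
using that $\omega^{ti}$ depends only on $i \bmod m$. So $f$ vanishes at all $m$th roots of unity $\neq 1$ precisely when the vector $\big(b(0,m;f), \dots, b(m-1,m;f)\big)$ is annihilated by the rows of the discrete Fourier / Vandermonde matrix $(\omega^{tk})$ indexed by $t = 1, \dots, m-1$. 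Since this matrix (with $t$ ranging over all of $0,\dots,m-1$) is invertible, the only solutions are the constant vectors: $b(k,m;f)$ is the same for every $k$. Finally, summing $f(1) = \sum_i a_i = \sum_k b(k,m;f)$ shows this common value must equal $f(1)/m$. That gives one direction; the converse is the same computation read backwards, since constancy of the $b(k,m;f)$ forces $f(\omega^t) = b \sum_{k} \omega^{tk} = 0$ for $t \not\equiv 0$.

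An alternative, more self-contained route avoiding any appeal to Fourier inversion: reduce modulo $[m]_q$ directly. Since $q^m \equiv 1 \pmod{[m]_q \cdot (q-1)}$ — more precisely $q^i \equiv q^{i \bmod m}$ modulo $q^m - 1$ — one gets $f(q) \equiv \sum_{k=0}^{m-1} b(k,m;f)\, q^k \pmod{q^m-1}$, and hence $[m]_q \mid f(q)$ iff $[m]_q \mid \sum_{k=0}^{m-1} b(k,m;f)\,q^k$ (one must check the quotient $(q^m-1)/[m]_q = q-1$ does not interfere, i.e. that $[m]_q \mid g(q)$ and $[m]_q \mid g(q) + (q-1)h(q)$ are compatible; handle this by noting $\gcd([m]_q, q-1)$ issues only at $q=1$ and tracking the constant $f(1)$ separately). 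Then a degree-$< m$ polynomial $\sum_{k=0}^{m-1} c_k q^k$ is divisible by the degree-$(m-1)$ polynomial $[m]_q$ iff it is a scalar multiple of $[m]_q$, i.e. iff all $c_k$ are equal. I expect the main obstacle to be precisely this bookkeeping about passing between "divisible by $q^m-1$" and "divisible by $[m]_q$" cleanly — one extra factor of $q-1$ — so I would either push everything through the root-of-unity evaluation (where it is automatic, since $1$ is simply excluded) or carefully isolate the $q=1$ contribution, which is exactly the content condition $\sum_k b(k,m;f) = m \cdot \frac{f(1)}{m}$.
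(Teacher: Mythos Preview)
Your proposal is correct, and the roots-of-unity computation $f(\alpha) = \sum_{k=0}^{m-1} b(k,m;f)\,\alpha^k$ is exactly the identity the paper uses. The one genuine difference is in the converse direction: the paper, having shown the forward implication via roots, switches to a direct argument---writing $f(q) = [m]_q \cdot g(q)$, expanding the product, and reading off $b(k,m;f) = g(1)$ for every $k$. You instead stay within the Fourier framework and invoke the invertibility of the Vandermonde matrix $(\omega^{tk})$ to conclude that vanishing at all nontrivial $m$th roots of unity forces the vector of coefficient sums to be constant. Your route is more symmetric (one mechanism for both directions) and arguably cleaner; the paper's route for the converse is more elementary in that it avoids any linear algebra over $\mathbb{C}$ and stays entirely in $\mathbb{Z}[q]$. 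Your side remark that monicity of $[m]_q$ lets you pass freely between divisibility in $\mathbb{C}[q]$ and $\mathbb{Z}[q]$ is a point the paper glosses over, so that is a small improvement in rigor. The alternative reduction-mod-$(q^m-1)$ sketch is fine in spirit but, as you note, the bookkeeping with the extra $q-1$ factor is awkward; the roots-of-unity version is the cleaner choice.
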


\begin{proof}
    First assume that $f(q)$ has equal content modulo $m$. It is possible (and an interesting linear algebra exercise) to explicitly construct the polynomial $g(q)$ with integer coefficients such that $f(q)=[m]_q\cdot g(q)$. A slicker proof is to show that each root of $[m]_q$ is also a root of $f(q)$. Suppose that $\alpha$ is a root of $[m]_q$, so that \[1+\alpha+\alpha^2+\ldots + \alpha^{m-1} = 0.\] Moreover, we know that $\alpha$ must be an $m$th root of unity, so that $\alpha^{\ell m + k}=\alpha^k$ for any $\ell \geq 0$.
    
    Now we can rewrite $f(\alpha)$ in the following way, noting that any auxiliary $a_i$ are zero for $i>n$:
    \begin{align*}
        f(\alpha)&=\sum_{i\geq 0} a_i \alpha^i = \sum_{k=0}^{m-1} \sum_{\ell \geq 0} a_{\ell m + k} \alpha^{\ell m + k} = \sum_{k=0}^{m-1} \sum_{\ell \geq 0} a_{\ell m+k} \alpha^k \\
        &= \sum_{k=0}^{m-1}b(k,m;f) \alpha^k  = \frac{f(1)}{m}  \sum_{k=0}^{m-1} \alpha^k = 0.
    \end{align*}
    Since any root of $[m]_q$ is a root of $f(q)$ and $[m]_q$ has no multiple roots, it follows that $[m]_q$ divides $f(q)$.
    
    Conversely, suppose that $[m]_q$ divides $f(q)$. Then there is a polynomial \[g(q)=\sum_{j=0}^{n-m+1}c_j q^j\] such that $f(q)=[m]_q\cdot g(q)$. But if we expand this product (again using auxiliary coefficients that are zero) we have
    \begin{align*}
        [m]_q\cdot g(q) &= \sum_{k=0}^{m-1}q^k \cdot \sum_{j\geq 0 } c_j q^j \\
        &= \sum_{j\geq 0} c_j \sum_{k=0}^{m-1} q^{j+k}.
    \end{align*}
    But as $k$ ranges from zero to $m-1$, $j+k$ also ranges over this same interval modulo $m$. It then follows that the coefficient sum \[b(k,m;f)=\sum_{j\geq 0 }c_j = g(1),\] from which the result follows. \end{proof}
    
    \subsection{$q$-analogues}
    
    We saw in the proof of the Lemma 1 the so-called $q$-analogue of a number $m\geq 1$, given by $[m]_q=1+q+q^2+\ldots+q^{m-1}$. The value of this polynomial at $q=1$ is $[m]_1=m$. Furthermore, define $[m]_q!=\prod_{i=1}^m [i]_q$ to be the $q$-analogue of the factorial. In general, a $q$-analogue of some number is a function (usually a function that equals a polynomial for $|q|<1$) whose limiting value as $q\rightarrow 1^-$ is equal to the value the function is an analogue of. The coefficients of the polynomial then give a combinatorially interesting partition of the number. 
    
    We will be concerned with $q$-analogues of the binomial coefficients---sometimes called Gaussian binomial coefficients---defined by \[{n\brack k}_{q}: =\frac{ [n]_q!}{[k]_q![n-k]_q!}= \frac{\prod_{i=1}^n (q^i-1)}{\prod_{i=1}^{k}(q^i-1)\prod_{i=1}^{n-k}(q^i-1)}\] which, for $|q|<1$, can be shown to be a polynomial. Often this is done by induction. It is also easily verified that the limiting value as $q\rightarrow 1^-$ is the regular binomial coefficient ${n\choose k}$. What will be significant for us will be the interpretation of the coefficient of $q^j$ with regards to the counting northeastern lattice paths. For northeastern lattice paths between $(0,0)$ and $(k,n-k)$ the coefficient of $q^j$ in the polynomial ${n\brack k}_q$ gives the number of the ${n\choose k}$ paths that enclose an area of $j$ (see Figure \ref{qan} for an example). This fact can be deduced from Proposition 1.7.3 in Chapter 1 of Stanley \cite{stanley2011enumerative}. With this interpretation we can also derive the $q$-analogue of Pascal's identity, \begin{align}\label{qpas}
    {n\brack k}_q &=  q^{n-k}{n-1\brack k-1}_q+{n-1\brack k}_q.
    \end{align}
    
    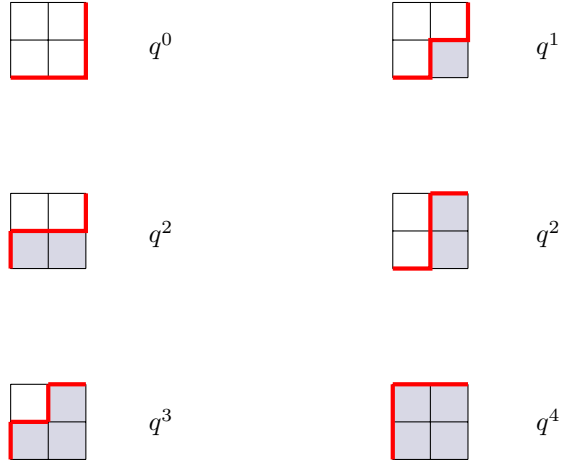
\begin{figure}
    \begin{center}
    \begin{tikzpicture}[scale=.5]
        \begin{scope}[yshift=4in]
            \draw[step=1,thin] (0,0) node[anchor=north east] {} grid (2,2)node[anchor=south west] {};
        
        \draw[ultra thick,red] (0,0) -- (2,0) -- (2,2);
        \end{scope}
        \begin{scope}[xshift=4in,yshift=4in]
        \fill[blue!50!olive,opacity=.2] (1,0) rectangle (2,1);
            \draw[step=1,thin] (0,0) node[anchor=north east] {} grid (2,2)node[anchor=south west] {};
        
        \draw[ultra thick,red] (0,0) -- (1,0) -- (1,1)--(2,1)--(2,2);
        \end{scope}
        \begin{scope}[yshift=2in]
        \fill[blue!50!olive,opacity=.2] (0,0) rectangle (2,1);
            \draw[step=1,thin] (0,0) node[anchor=north east] {} grid (2,2)node[anchor=south west] {};
        
        \draw[ultra thick,red] (0,0) -- (0,1);
        \draw[ultra thick, red] (0,1) -- (2,1)--(2,2);
        \end{scope}
        \begin{scope}[xshift=4in,yshift=2in]
        \fill[blue!50!olive,opacity=.2] (1,0) rectangle (2,2);
            \draw[step=1,thin] (0,0) node[anchor=north east] {} grid (2,2)node[anchor=south west] {};
        
        \draw[ultra thick,red] (0,0) -- (1,0) -- (1,1);
        \draw[ultra thick,red](1,1)--(1,2);
        \draw[ultra thick,red](1,2)--(2,2);
        \end{scope}
        \begin{scope}[yshift=0in,xshift=4in]
        \fill[blue!50!olive,opacity=.2] (0,0) rectangle (2,2);
            \draw[step=1,thin] (0,0) node[anchor=north east] {} grid (2,2)node[anchor=south west] {};
        
        \draw[ultra thick,red] (0,0)--(0,2);
        \draw[ultra thick,red] (0,2) -- (2,2);
        \end{scope}
        \begin{scope}[xshift=0in]
        \fill[blue!50!olive,opacity=.2] (0,0)--(0,1)--(1,1)--(1,2)--(2,2)--(2,0)--cycle;
            \draw[step=1,thin] (0,0) node[anchor=north east] {} grid (2,2)node[anchor=south west] {};
        
        \draw[ultra thick,red] (0,0) -- (0,1);
        \draw[ultra thick,red] (0,1) -- (1,1);
        \draw[ultra thick,red] (1,1) -- (1,2);
        \draw[ultra thick,red] (1,2) -- (2,2);
        \end{scope}
        \node[align=center] at (4,1){$q^3$};
        \node[align=center] at (4,6){$q^2$};
        \node[align=center] at (4,11){$q^0$};
        \node[align=center] at (14.3,1){$q^4$};
        \node[align=center] at (14.3,6){$q^2$};
        \node[align=center] at (14.3,11){$q^1$};
    \end{tikzpicture}
\end{center}
\caption{A visualization of the ${4\choose 2}=6$ northeastern lattice paths enclosing an area equal to $j$ as represented by the coefficients of the polynomial ${4\brack 2}_q = 1+q^2+2q^2+q^3+q^4$.}
\label{qan}
\end{figure}

\subsection{Putting it all together}

Based on our discussion of $q$-analogues we must show that the polynomial ${2n\brack n}_q$ has equal content modulo $2n-1$. By Lemma 1 it then suffices to show that the polynomial $[2n-1]_q$ divides ${2n \brack n}_q$ for all $n\geq 1$. For this we derive a $q$-analogue of equation (\ref{keyid}). By repeatedly applying the $q$-analogue of Pascal's identity (\ref{qpas}) we have
\begin{align*}
    {2n\brack n}_q &= (1+q^n){2n-1\brack n}_q \\
    &= (1+q^n) \left( q^{n-1} {2n-2 \brack n-1}_q + {2n-2 \brack n}_q \right) \\
    &=(1+q^n) \left(q^{n-1} + \frac{1-q^{n-1}}{1-q^n}\right) {2(n-1)\brack n-1}_q \\
    &=(1+q^n)\cdot [2n-1]_q \cdot \frac{{2(n-1) \brack n-1}_q}{[n]_q}.
\end{align*}
We now just need to show that \[\frac{{2(n-1) \brack n-1}_q}{[n]_q}\] is a polynomial for all $n\geq 1$. To see why this is the case one can verify the identity
\begin{align}
    \label{catd} \frac{1}{[n+1]_q}{2n \brack n}_q &= {2n\brack n}_q - q{2n\brack n+1}_q 
\end{align}
and Theorem 1 now follows. \qed

\section{Combinatorial discussion}

Since Theorem 1 is a statement specifically about northeastern lattice paths, it would be nice to find a proof that lives solely within the context of lattice paths. In this section, we provide a combinatorial proof of Theorem 1, along with a discussion of how Theorem 1 can be interpreted under other, equivalent reformulations of the problem.

\subsection{A combinatorial proof of Theorem 1}

The central idea to this proof is that we introduce a map on the set of northeastern lattice paths from $(0,0)$ to $(n,n)$ such that the map has a period of $2n - 1$ and cycles the area enclosed under each path modulo $2n - 1$ by an integer relatively prime to $2n - 1$. Such a map would partition the set of ${2n}\choose{n}$ lattice paths into disjoint classes of size $2n - 1$ with every possible area modulo $2n - 1$ being represented exactly once.

For the sake of convenience, let $P(n)$ refer to the set of northeastern lattice paths from $(0,0)$ to $(n,n)$. Every path $\pi \in P(n)$ can be uniquely described by a sequence of $2n$ unit steps $(\pi_{1}, \ldots, \pi_{2n}),$ where each $\pi_k$ is either the horizontal step $(1, 0)$ or the vertical step $(0, 1)$. An important distinction is between paths $\pi \in P(n)$ such that $\pi_{1} = (1, 0)$, we will call this set $P_{0}(n)$, and those such that $\pi_{1} = (0, 1)$, we will call this set $P_{1}(n)$. For now, we are only concerned with paths in $P_{0}(n)$, and we will deal with the other type of path later. 

We will now introduce a mapping $\phi$ between lattice paths $\pi \in P_{0}(n).$ Let $\phi(\pi) = \rho$, where

\[ \begin{cases} 
    \rho_{1} = \pi_{1} \\
    \rho_{2} = \pi_{2n} \\
    \rho_{k} = \pi_{k - 1}, & 3 \leq k \leq 2n.
   \end{cases}
\]
Figure \ref{map} portrays two examples of $\phi$ acting on a 4 by 4 lattice path.

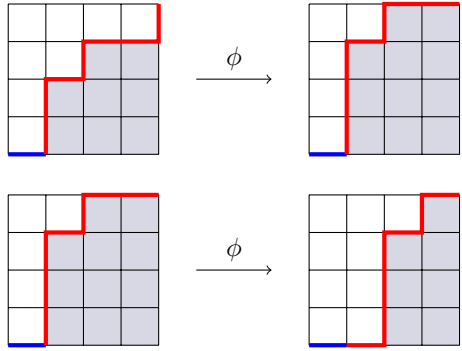
\begin{figure}[h]
\begin{center}
    \begin{tikzpicture}[scale=.5]
    \fill[blue!50!olive,opacity=.2](1,0)--(1,2)--(2,2)--(2,3)--(4,3)--(4,0)--cycle;
        \draw[step=1,thin] (0,0) node[anchor=north east] {} grid (4,4)node[anchor=south west] {};
        \draw[ultra thick,blue] (0,0) -- (1,0);
        \draw[ultra thick,red] (1,0) --(1,2)--(2,2)--(2,3)--(4,3)--(4,4);
        
        \begin{scope}[xshift=8cm]
        \fill[blue!50!olive,opacity=.2](1,0)--(1,3)--(2,3)--(2,4)--(4,4)--(4,0)--cycle;
        \draw[step=1,thin] (0,0) node[anchor=north east] {} grid (4,4)node[anchor=south west] {};
        \draw[ultra thick,blue] (0,0) -- (1,0);
        \draw[ultra thick,red] (1,0) --(1,3)--(2,3)--(2,4)--(4,4);
        \end{scope}
        \draw[->] (5,2)--(7,2);
        
        \node[anchor=south] at (6,2){$\phi$};
        
        \begin{scope}[yshift=-2in]
        \fill[blue!50!olive,opacity=.2](1,0)--(1,3)--(2,3)--(2,4)--(4,4)--(4,0)--cycle;
        \draw[step=1,thin] (0,0) node[anchor=north east] {} grid (4,4)node[anchor=south west] {};
        \draw[ultra thick,blue] (0,0) -- (1,0);
        \draw[ultra thick,red] (1,0) --(1,3)--(2,3)--(2,4)--(4,4);
        
        \begin{scope}[xshift=8cm]
        \fill[blue!50!olive,opacity=.2](2,0)--(2,3)--(3,3)--(3,4)--(4,4)--(4,0)--cycle;
        \draw[step=1,thin] (0,0) node[anchor=north east] {} grid (4,4)node[anchor=south west] {};
        \draw[ultra thick,blue] (0,0) -- (1,0);
        \draw[ultra thick,red] (1,0)--(2,0) --(2,3)--(3,3)--(3,4)--(4,4);
        \end{scope}
        \draw[->] (5,2)--(7,2);
        
        \node[anchor=south] at (6,2){$\phi$};
        \end{scope}
    \end{tikzpicture}
\end{center}
\caption{A graphical representation of the map $\phi$ on two paths in $P_0(4)$. Top: the enclosed area increases by 3. Bottom: the enclosed area decreases by 4. We have $3 \equiv -4$ (mod $7$), so the area changes by the same amount modulo 7.}
\label{map}
\end{figure}

This map can be interpreted visually as shifting the whole path after $(1,0)$ either north one unit or east one unit according to the value of $\pi_{2n}.$ It should be apparent from the definition of $\phi$ that it is cyclical and repeats every $2n - 1$ iterations, meaning that its period divides $2n - 1$.

We will now examine how $\phi$ affects the enclosed area of the lattice paths that it acts on. Let $\pi\in P_0(n)$ and suppose $\pi$ encloses an area of $k$ modulo $2n-1$. Notice that visually there are two ways that $\phi$ can affect the area of $\pi$: either $\phi$ ``deletes'' the rightmost column of enclosed unit squares, or $\phi$ ``inserts'' a bottom row of enclosed unit squares. The former occurs when $\pi_{2n} = (1,0)$, in which case the enclosed area of $\phi(\pi)$ decreases by $n$. The latter occurs when $\pi_{2n} = (0,1)$, in which case the enclosed area of $\phi(\pi)$ increases by $n-1$. However, modulo $2n - 1$, an increase by $n - 1$ is equivalent to a decrease by $n$. Thus, $\phi(\pi)$ encloses an area congruent to $k + n - 1$ modulo $2n - 1$. Since $n - 1$ is relatively prime to $2n - 1$, this consequently forces the period of $\phi$ to equal $2n - 1$. Figure \ref{mapall} demonstrates the effect that repeatedly applying $\phi$ on a path has on the enclosed area.

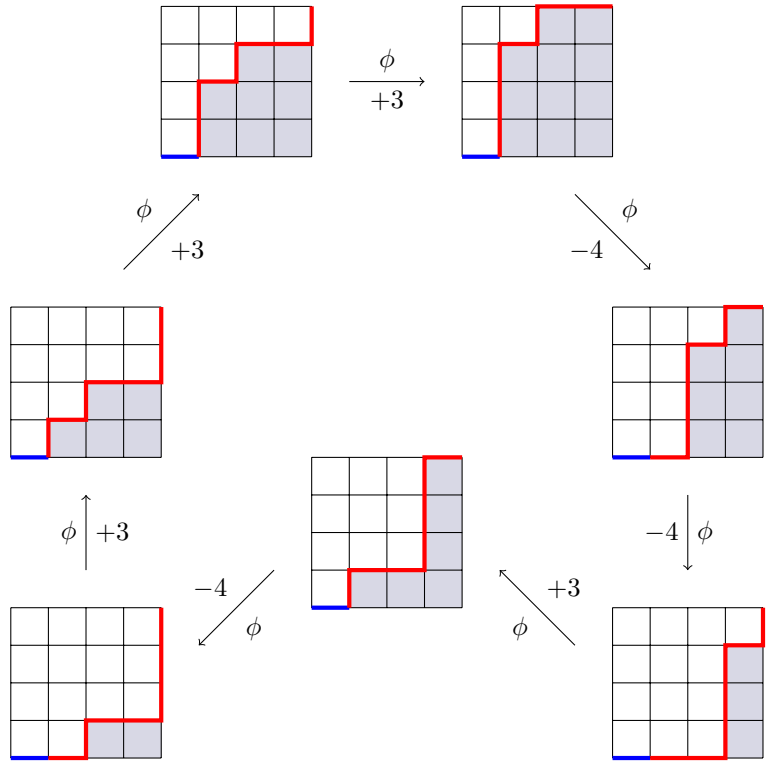
\begin{figure}[h]
\begin{center}
    \begin{tikzpicture}[scale=.5]
    \fill[blue!50!olive,opacity=.2](1,0)--(1,2)--(2,2)--(2,3)--(4,3)--(4,0)--cycle;
        \draw[step=1,thin] (0,0) node[anchor=north east] {} grid (4,4)node[anchor=south west] {};
        \draw[ultra thick,blue] (0,0) -- (1,0);
        \draw[ultra thick,red] (1,0) --(1,2)--(2,2)--(2,3)--(4,3)--(4,4);
        
        \begin{scope}[xshift=8cm]
        \fill[blue!50!olive,opacity=.2](1,0)--(1,3)--(2,3)--(2,4)--(4,4)--(4,0)--cycle;
        \draw[step=1,thin] (0,0) node[anchor=north east] {} grid (4,4)node[anchor=south west] {};
        \draw[ultra thick,blue] (0,0) -- (1,0);
        \draw[ultra thick,red] (1,0) --(1,3)--(2,3)--(2,4)--(4,4);
        \end{scope}
        
        \begin{scope}[xshift=12cm,yshift=-8cm]
        \fill[blue!50!olive,opacity=.2](2,0)--(2,3)--(3,3)--(3,4)--(4,4)--(4,0)--cycle;
        \draw[step=1,thin] (0,0) node[anchor=north east] {} grid (4,4)node[anchor=south west] {};
        \draw[ultra thick,blue] (0,0) -- (1,0);
        \draw[ultra thick,red] (1,0) --(2,0)--(2,3)--(3,3)--(3,4)--(4,4);
        \end{scope}
        
        \begin{scope}[xshift=12cm,yshift=-16cm]
        \fill[blue!50!olive,opacity=.2](3,0)--(3,3)--(4,3)--(4,0)--cycle;
        \draw[step=1,thin] (0,0) node[anchor=north east] {} grid (4,4)node[anchor=south west] {};
        \draw[ultra thick,blue] (0,0) -- (1,0);
        \draw[ultra thick,red] (1,0) --(3,0)--(3,3)--(4,3)--(4,4);
        \end{scope}
        
        \begin{scope}[xshift=4cm,yshift=-12cm]
        \fill[blue!50!olive,opacity=.2](1,0)--(1,1)--(3,1)--(3,4)--(4,4)--(4,0)--cycle;
        \draw[step=1,thin] (0,0) node[anchor=north east] {} grid (4,4)node[anchor=south west] {};
        \draw[ultra thick,blue] (0,0) -- (1,0);
        \draw[ultra thick,red] (1,0) --(1,1)--(3,1)--(3,4)--(4,4);
        \end{scope}
        
        \begin{scope}[xshift=-4cm,yshift=-16cm]
        \fill[blue!50!olive,opacity=.2](2,0)--(2,1)--(4,1)--(4,0)--cycle;
        \draw[step=1,thin] (0,0) node[anchor=north east] {} grid (4,4)node[anchor=south west] {};
        \draw[ultra thick,blue] (0,0) -- (1,0);
        \draw[ultra thick,red] (1,0) --(2,0)--(2,1)--(4,1)--(4,4);
        \end{scope}
        
        \begin{scope}[xshift=-4cm,yshift=-8cm]
        \fill[blue!50!olive,opacity=.2](1,0)--(1,1)--(2,1)--(2,2)--(4,2)--(4,0)--cycle;
        \draw[step=1,thin] (0,0) node[anchor=north east] {} grid (4,4)node[anchor=south west] {};
        \draw[ultra thick,blue] (0,0) -- (1,0);
        \draw[ultra thick,red] (1,0) --(1,1)--(2,1)--(2,2)--(4,2)--(4,4);
        \end{scope}

        \draw[->] (5,2)--(7,2);
        \node[anchor=south] at (6,2){$\phi$};
        \node[anchor=north] at (6,2){$+3$};
        
        \draw[->] (11,-1)--(13,-3);
        \node[anchor=south west] at (12,-2){$\phi$};
        \node[anchor=north east] at (12,-2){$-4$};
        
        \draw[->] (14,-9)--(14,-11);
        \node[anchor=west] at (14,-10){$\phi$};
        \node[anchor=east] at (14,-10){$-4$};
        
        \draw[->] (11,-13)--(9,-11);
        \node[anchor=north east] at (10,-12){$\phi$};
        \node[anchor=south west] at (10,-12){$+3$};
        
        \draw[->] (3,-11)--(1,-13);
        \node[anchor=north west] at (2,-12){$\phi$};
        \node[anchor=south east] at (2,-12){$-4$};
        
        \draw[->] (-2,-11)--(-2,-9);
        \node[anchor=east] at (-2,-10){$\phi$};
        \node[anchor=west] at (-2,-10){$+3$};
        
        \draw[->] (-1,-3)--(1,-1);
        \node[anchor=south east] at (0,-2){$\phi$};
        \node[anchor=north west] at (0,-2){$+3$};

    \end{tikzpicture}
\end{center}
\caption{Starting with a northeastern lattice path $\pi\in P_0(4)$ the enclosed area increases by 3 modulo 7 each time $\phi$ is applied. We thus obtain a set of 7 paths, each enclosing a different area modulo 7.}
\label{mapall}
\end{figure}

We now extend the domain of $\phi$ to also include paths in $P_1(n)$. Define the transpose of a lattice path $\pi$ as $\pi^{T} = ((\pi_{1,1}, \pi_{1,0}), \ldots, (\pi_{2n,1}, \pi_{2n,0}))$, which geometrically corresponds to mirroring the path along the main diagonal. Then we define $\phi(\pi) = \phi(\pi^{T})^{T}$ for all $\pi \in P_{1}(n).$ In order to check that this extended definition of $\phi$ harbors the same area-preserving property, let $\pi$ be an element of $P_1(n)$ and suppose that $\pi$ encloses an area congruent to $k$ modulo $2n - 1$. Then, $\pi^T$ encloses an area congruent to $n^2 - k$ modulo $2n - 1$, and since $\pi^T \in P_0(n)$, we know that $\phi(\pi^T)$ encloses an area congruent to $n^2 - k + n - 1$ modulo $2n - 1$. It follows that $\phi(\pi)$ encloses an area congruent to $n^2 - (n^2 - k + n - 1) = k - (n - 1) \equiv k + n$ modulo $2n - 1$. Since $n$ is also relatively prime to $2n - 1$, $\phi$ will still have period $2n - 1$ when acting on paths in $P_1(n)$. Therefore, the orbits of $\phi$ partition the set of northeastern lattice paths from $(0, 0)$ to $(n, n)$ into disjoint classes of size $2n - 1$ such that within every orbit of $\phi$, each lattice path will have a distinct enclosed area modulo $2n - 1$.

Suppose now that we want to count the number of northeastern lattice paths that enclose an area congruent to $k$ modulo $2n - 1$. We know that the total number of northeastern lattice paths confined to this grid is ${2n}\choose{n}$, and we found a way to divide these lattice paths evenly into sets of cardinality $2n - 1$, such that an enclosed area congruent to $k$ modulo $2n - 1$ occurs exactly once per set. We therefore conclude that the number of such paths is equal to $\frac{1}{2n - 1}{{2n}\choose{n}}$. \qed

\subsection{Binary words}

It is elementary to see that ${2n}\choose{n}$ equals the number of ways to choose $n$ elements from a set of size $2n$, thus looking at binary words is arguably the most natural way to study this binomial coefficient. A binary word is simply a finite sequence of 1's and 0's. Therefore, the number of binary words with length $2n$ such that exactly $n$ of the digits are 0 is ${2n}\choose{n}$. We will refer to such binary words as \textit{even binary words of length $2n$}.

The fact that $2n - 1$ divides ${2n}\choose{n}$ is easy to see in this context. Define the following map: for each even binary word $b$ of length $2n$, fix the first digit of $b$, and permute the other $2n - 1$ digits by shifting them one space to the right, moving the rightmost digit to the second position from the left. The part of $b$ being cycled has length $2n - 1$, so the period of this map divides $2n-1$. Furthermore, the number of 0's in this part is either equal to $n$ or $n - 1$. Since both $n$ and $n - 1$ are relatively prime to $2n - 1$, this map must have a period of exactly $2n - 1$. (More generally, a similar map shows that $n$ divides ${n}\choose{k}$ when $n$ and $k$ are relatively prime---we will explore this more in Section 4.)

However, this fact still does not illuminate what $\frac{1}{2n - 1}{{2n}\choose{n}}$ equals in this situation. To do so, we must explore the connection between northeastern lattice paths from $(0,0)$ to $(n,n)$ and even binary words of length $2n$. Notice that every even binary word $b$ with length $2n$ can actually be interpreted as an instruction for drawing a path on a lattice from $(0,0)$ to $(n, n)$. Start at $(0,0)$, and for each digit in $b$, move up one space if the digit is a 1 and move right one space if the digit is a 0. Since $b$ is even and contains $n$ ones and $n$ zeros the finishing coordinate will be $(n,n)$. Thus, the problem of binary words can simply be viewed as a reformulation of the problem of lattice paths. 

The area enclosed by a lattice path also has a natural interpretation in the context of binary words. An \textit{inversion} in a binary word $b$ is defined as a subsequence in $b$ of the form ``1  0''. We stress here that we do not mean adjacent subsequences; there are ${2n\choose 2}$ total subsequences. Let $I(b)$ denote the number of inversions in $b$. To count $I(b)$, simply take every 1 in word $b$ and count the number of 0's that come after that 1. Let $\pi$ denote the lattice path represented by $b$. The key observation is that every unit square enclosed by $\pi$ can be represented by its vertical height along with its horizontal distance from the lattice path at that height. Since every 1 in $b$ represents a distinct height along the path $\pi$, and since every 0 following that 1 represents a distinct horizontal distance away from the path at that height, every unit square enclosed by $\pi$ corresponds uniquely to an inversion in $b$. Therefore, the area enclosed by $\pi$ is equal to $I(b)$. Figure \ref{invpath} shows this connection.

\begin{figure}[h]
\begin{center}
    \begin{tikzpicture}[scale=.5]
    \fill[blue!50!olive,opacity=.2](1,0)--(1,2)--(2,2)--(2,3)--(4,3)--(4,0)--cycle;
    \fill[blue!50!olive,opacity=.4] (3,1) rectangle (4,2);
        \draw[step=1,thin] (0,0) node[anchor=north east] {} grid (4,4)node[anchor=south west] {};
        \draw[ultra thick,red] (0,0) -- (1,0);
        \draw[ultra thick,red] (1,0) --(1,2)--(2,2)--(2,3)--(4,3)--(4,4);
        
        \node at (9,2){$0\ 1\ \underline{1}\ 0\ 1\ 0\ \underline{0}\ 1$};

    \end{tikzpicture}
\end{center}
\caption{A northeastern lattice path from $(0,0)$ to $(4,4)$ enclosing an area of 8 (left) and the corresponding even binary word of length 8 having 8 inversions (right). The underlined inversion corresponds to the highlighted box.}
\label{invpath}
\end{figure}
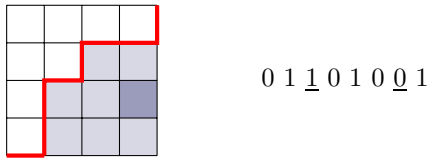

We can now restate Theorem 1 in terms of binary words.

\begin{thm}
    The number of even binary words of length $2n$ having the number of inversions congruent to $k$ modulo $2n - 1$ is independent of $k$, and is equal to $\frac{1}{2n - 1}{{2n}\choose{n}}$.
\end{thm}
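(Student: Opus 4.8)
The plan is to read off Theorem 2 from Theorem 1 via the dictionary just assembled. The assignment sending an even binary word $b=(b_1,\dots,b_{2n})$ of length $2n$ to the lattice path $\pi\in P(n)$ it traces out --- reading left to right, a $1$ prescribes a north step and a $0$ an east step --- is a bijection, being visibly injective with both sides of cardinality ${2n\choose n}$. We have already verified that this bijection carries the inversion count $I(b)$ to the area enclosed by $\pi$: every $1$ in $b$ marks a horizontal level of $\pi$, every $0$ occurring later in $b$ contributes one unit of horizontal gap between $\pi$ and the line $x=n$ at that level, and the ``$1$ then $0$'' subsequences are thereby in bijection with the enclosed unit squares. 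Hence for each residue $k$ the even binary words with $I(b)\equiv k\mod{2n-1}$ correspond precisely to the paths of $P(n)$ enclosing an area $\equiv k\mod{2n-1}$, whose number is $\frac{1}{2n-1}{2n\choose n}$ by Theorem 1, independently of $k$.

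If one prefers an argument staying entirely within the world of binary words --- more in the spirit of Section 3.1 --- one can transport the map $\phi$ through this bijection. For $b$ with $b_1=0$ the transported map fixes $b_1$ and cyclically rotates the block $(b_2,\dots,b_{2n})$ by one position, so that $b_{2n}$ lands in position $2$; for $b$ with $b_1=1$ one first complements $b$ (swapping $0\leftrightarrow 1$, the word-level counterpart of transposing the path), performs the same rotation, and complements back. A short case check shows this operation shifts $I(b)$ by a residue modulo $2n-1$ that is coprime to $2n-1$ --- the subcases $b_{2n}=0$ and $b_{2n}=1$ yield shifts of $-n$ and $n-1$, which are congruent modulo $2n-1$ --- so each orbit has size exactly $2n-1$ and meets each inversion-residue class exactly once; dividing ${2n\choose n}$ by the common orbit size gives the count.

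There is no real obstacle here: since the identity relating $I(b)$ to the enclosed area has already been established in the paragraph preceding the statement, Theorem 2 is a genuine restatement of Theorem 1, and the only thing to be careful about is confirming that the binary-word/lattice-path correspondence is a bijection matching the two statistics, which it is by construction.
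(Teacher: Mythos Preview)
Your proposal is correct and matches the paper's own treatment: the paper establishes the word--path bijection and the identity $I(b)=\text{area}(\pi)$ in the paragraphs preceding Theorem~2 and then simply presents Theorem~2 as a restatement of Theorem~1, which is exactly your first argument. Your optional second paragraph, transporting $\phi$ to the binary-word setting, is also faithful to Section~3.1 (the complement--rotate--complement move is precisely the word-level version of $\phi(\pi^{T})^{T}$), so there is nothing to add or correct.
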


\begin{rmk}
    There is a connection between inversions in a binary word $b$ and another statistic, called the \textit{major index} of $b$, which is the sum of the indices of $b$ for which there is a 1 followed immediately by a 0. For example, \[{\bf1} \ 0 \ 0 \ 1 \ {\bf1} \ 0 \ 1\] has major index $1+5=6$. A result of MacMahon \cite{MM} implies the number of even binary word of length $2n$ with $k$ inversions equals the number of even binary words of length $2n$ with major index equal to $k$. In light of this, we may replace ``the number of inversions" with ``major index" in Theorem 2.
\end{rmk}

\subsection{Increasing integer sequences}

Now ${2n}\choose{n}$ also equals the number of increasing integer sequences of the form $1 \leq a_{1} < \ldots < a_{n} \leq 2n$, since we are choosing $n$ distinct numbers out of $2n$ total numbers. It would then be natural to ask what the value $\frac{1}{2n - 1}{{2n}\choose{n}}$ represents in terms of increasing integer sequences.

\begin{thm}
    The number of increasing integer sequences of length $n$ bounded by 1 and $2n$ that have a sum congruent to $k$ modulo $2n - 1$ is independent of $k$, and is equal to $\frac{1}{2n - 1}{{2n}\choose{n}}$.
\end{thm}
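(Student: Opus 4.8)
The plan is to deduce Theorem 3 from Theorem 2 by means of a bijection between increasing integer sequences of length $n$ bounded by $1$ and $2n$ and even binary words of length $2n$, under which the sum of a sequence and the inversion number of the corresponding word differ by a constant depending only on $n$.

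First I would set up the bijection. Given a sequence $1 \le a_1 < a_2 < \cdots < a_n \le 2n$, let $b = b_1 b_2 \cdots b_{2n}$ be the binary word with $b_i = 0$ exactly when $i \in \{a_1, \ldots, a_n\}$ and $b_i = 1$ otherwise. Since $b$ has $n$ zeros and $n$ ones it is an even binary word of length $2n$, and the map is plainly a bijection---the sequence simply records the indices at which the associated northeastern lattice path takes an east step.

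Next I would express the inversion number $I(b)$ in terms of the $a_r$. An inversion is a pair $i < j$ with $b_i = 1$ and $b_j = 0$; fixing the zero at position $a_r$, the positions $i < a_r$ carrying a $1$ are exactly the $(a_r - 1) - (r-1) = a_r - r$ positions below $a_r$ other than $a_1, \ldots, a_{r-1}$. Summing over $r$ gives
\[ I(b) = \sum_{r=1}^{n} (a_r - r) = \left( \sum_{r=1}^{n} a_r \right) - {n+1 \choose 2}, \]
so $\sum_{r=1}^n a_r = I(b) + {n+1 \choose 2}$. Consequently $\sum_r a_r \equiv k \mod{2n-1}$ if and only if $I(b) \equiv k - {n+1 \choose 2} \mod{2n-1}$.

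Finally, applying Theorem 2 to the residue class $k - {n+1 \choose 2}$, the number of even binary words of length $2n$ whose inversion number lies in that class equals $\frac{1}{2n-1}{2n \choose n}$, independently of $k$; transporting this count back through the bijection yields Theorem 3. I do not expect a genuine obstacle here: the content lies entirely in identifying the right encoding (recording the zero-positions rather than the one-positions is what makes the shift a clean additive constant), and the only place demanding a little care is the bookkeeping in the inversion count above.
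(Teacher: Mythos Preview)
Your proof is correct, but it takes a different route from the paper. The paper proves Theorem 3 directly, mimicking the combinatorial argument from Section 3.1: it splits $S(n)$ into $S_0(n)$ (sequences with $a_1=1$) and $S_1(n)$ (sequences with $a_1>1$), and on each piece defines a cyclic map that fixes or does not fix the first entry and shifts the remaining entries by $+1$ modulo the range $\{2,\ldots,2n\}$. These maps increase the sum by $n-1$ and $n$ respectively modulo $2n-1$, both coprime to $2n-1$, so every orbit has size $2n-1$ and meets each residue class once. Your approach instead reduces Theorem 3 to the already-proved Theorem 2 via the bijection recording the zero-positions; the key computation $I(b)=\sum_r a_r-\binom{n+1}{2}$ is clean and correct. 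Your reduction is shorter and avoids rebuilding the orbit argument from scratch, at the cost of depending on Theorem 2; the paper's version is self-contained and makes the parallel with the lattice-path map $\phi$ more explicit.
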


The proof of this claim is very much of the same flavor as the proof given in Section 3.1. Let $S(n) = \left \{\alpha= \{a_{1}, \ldots, a_{n}\} : 1 \leq a_{1} < \ldots < a_{n} \leq 2n \right \}$ denote the set of increasing integer sequences of length $n$ that are bounded by $1$ and $2n$. We will make a distinction between sequences $\alpha \in S(n)$ such that $a_{1} = 1$ which we will call $S_{0}(n)$, and sequences such that $a_{1} > 1$ which we will call $S_{1}(n)$. 

We now define a couple of maps. Let $\phi_{0}$ be a map on $S_{0}(n).$ For all $\alpha \in S_{0}(n)$, let $\phi_{0}(a) = \beta=\{b_1,b_2\ldots,b_n\}$, where

\[ \begin{cases} 
    b_{1} = a_{1}, \\
    b_{k} = a_{k} + 1 & \text{for }2 \leq k \leq n - 1, \\
    b_{n} = a_{n} + 1 & \text{if } a_{n} < 2n, \\
    b_{n} = 2, & \text{if } a_{n} = 2n.
   \end{cases}
\]

Notice that this map was constructed to repeat every $2n - 1$ iterations. It remains to determine how applying $\phi_{0}$ on $\alpha$ affects the sum of its terms. Note that either $b_{n} = a_{n} + 1$ or $b_{n} = 2 = a_{n} - (2n - 2)$, and in either case $b_{n} \equiv a_{n} + 1 \ (\textrm{mod } 2n - 1)$. Thus,

\[
    \sum_{k = 1}^{n}b_{k} = b_{1} + \sum_{k = 2}^{n}b_{k} \\
    \equiv a_{1} + \sum_{k = 2}^{n} (a_{k} + 1) \\
    \equiv \sum_{k = 1}^{n} a_{k} + (n - 1)\ (\textrm{mod } 2n - 1).
\]

Similarly, let $\phi_{1}$ be a map on $S_{1}(n).$ For all $\alpha \in S_{1}(n)$, let $\phi_{1}(\alpha) = \beta$, where 

\[ \begin{cases} 
    b_{k} = a_{k} + 1 & \text{for } 1 \leq k \leq n - 1, \\
    b_{n} = a_{n} + 1 & \text{if } a_{n} < 2n, \\
    b_{n} = 2 & a_{n} = \text{if }2n.
   \end{cases}
\]

This map also repeats every $2n - 1$ iterations, and similar reasoning can be applied to show that applying $\phi_{1}$ on $\alpha$ increases the sum of its terms by $n$ modulo $2n - 1$. 

Finally, we will define a map $\phi^{\prime}$ on $S(n)$ as follows:

\[ \phi^{\prime}(\alpha) = \begin{cases}
        \phi_{0}(\alpha), & \alpha \in S_{0} \\
        \phi_{1}(\alpha), & \alpha \in S_{1}.
        \end{cases}
\]

Now $\phi^{\prime}$ splits $S(n)$ up into disjoint classes of size $2n - 1$. Since $n$ and $n - 1$ are relatively prime to $2n - 1$, every class contains $2n - 1$ distinct sums modulo $2n - 1$. Thus, the number of occurrences of any particular sum modulo $2n - 1$ is equal to $\frac{1}{2n - 1}{{2n}\choose{n}}$.

\begin{rmk}
    We may also consider {\it weakly} increasing sequences of the form $0\leq a_1\leq a_2 \leq \ldots \leq n$. These weakly increasing sequences can be thought of as integer partitions with at most $n$ parts, each of which is at most $n$. There is a one-to-one correspondence between these weakly increasing sequences and northeastern lattice paths from $(0,0)$ to $(n,n)$, where $a_i$ gives the enclosed area contained in the $i$th column of the $n\times n$ grid. Thus Theorem 3 can be restated in terms of the weakly increasing sequences described above.
\end{rmk}

\section{Generalizations and Applications}

In this section we will examine how our combinatorial proof of Theorem 1 in Section 3.1 adapts when looking at divisors of other binomial coefficients. 

\subsection{When $n$ and $k$ are relatively prime}

It is well known that if $p$ is prime then for any $0<k<p$ the binomial coefficient ${p\choose k}$ is divisible by $p$. A slight generalization of this fact states that if $0<k<n$ and $n$ and $k$ are relatively prime then $n$ divides ${n\choose k}$. This result follows from Kummer's theorem \cite{kummer_e_e_1852_1448864} that states the $p$-adic valuation of ${n\choose k}$ is the number of carries when $k$ and $n-k$ are added in their base-$p$ representation. 

We now fix $0<k<n$ with $n$ and $k$ relatively prime. We will show that our map $\phi$ described in the combinatorial proof of Theorem 1 of Section 3.1 can be adapted to show the following.

\begin{thm}
    If $n$ and $k$ are relatively prime with $0<k<n$, then for any $0\leq m < n$ the number of northeastern lattice paths from $(0,0)$ to $(k,n-k)$ enclosing an area of $m$ modulo $n$ is independent of $m$, and equal to \[\frac{1}{n}{n\choose k}.\]
\end{thm}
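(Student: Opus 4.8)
The plan is to replay the cyclic-shift argument of Section 3.1, with one simplification made possible by the hypothesis $\gcd(n,k)=1$: this time the shift acts on \emph{all} $n$ steps, so there is no need to single out a first step or to split the paths into two classes $P_0,P_1$. Write $P(n,k)$ for the set of northeastern lattice paths from $(0,0)$ to $(k,n-k)$, and describe each $\pi\in P(n,k)$ by its sequence of $n$ unit steps $(\pi_1,\dots,\pi_n)$, exactly $k$ of which are east steps $(1,0)$ and $n-k$ of which are north steps $(0,1)$. Define $\phi(\pi)=(\pi_n,\pi_1,\pi_2,\dots,\pi_{n-1})$, the cyclic shift. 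Since $\phi^n$ is the identity, every $\phi$-orbit has length dividing $n$.

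The first step is to show that every orbit in fact has length exactly $n$. If some orbit were shorter, then $\pi$ would be fixed by $\phi^d$ for some divisor $d<n$ of $n$, hence $d$-periodic as a word; but then $n/d$ would divide the number $k$ of east steps, so $n/d\mid\gcd(n,k)=1$ and $d=n$, a contradiction. Thus $\langle\phi\rangle$ acts freely on $P(n,k)$ --- which, incidentally, re-proves $n\mid{n\choose k}$ --- and $P(n,k)$ is partitioned into $\frac1n{n\choose k}$ orbits, each of size $n$.

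Next I would track the enclosed area. Recall from Section 3.2 that the area enclosed by a path $\pi$ equals the number of ``$1\,0$'' inversions of the associated binary word $b=b_1\cdots b_n$ (north $=1$, east $=0$). Applying $\phi$ moves $b_n$ to the front: if $b_n=1$ it picks up one inversion with each of the $k$ zeros now lying to its right, increasing the area by $k$; if $b_n=0$ it loses the $n-k$ inversions it previously formed with the north steps preceding it, decreasing the area by $n-k$. Since $-(n-k)\equiv k\pmod n$, in both cases the enclosed area increases by $k$ modulo $n$ at each application of $\phi$. (One can also see this geometrically: prepending a north step shifts the rest of the path up and inserts a leftmost column contributing $k$ to the area, while prepending an east step shifts the rest of the path right and erases one unit of area from each of the $n-k$ rows.)

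Finally, because $\gcd(k,n)=1$, the residues $0,\,k,\,2k,\dots,(n-1)k$ are exactly $0,1,\dots,n-1$ modulo $n$ in some order, so within each $\phi$-orbit every residue class of the enclosed area occurs exactly once. Hence the number of paths in $P(n,k)$ enclosing an area $\equiv m\pmod n$ equals the number of orbits, $\frac1n{n\choose k}$, independently of $m$, which is Theorem 4. I expect the only point requiring real care --- as opposed to routine bookkeeping --- to be the orbit-length step: it is precisely the coprimality of $n$ and $k$ that simultaneously forbids short orbits and forces the area to advance by an amount that is invertible modulo $n$, and both uses of this hypothesis should be isolated cleanly.
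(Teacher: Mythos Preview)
Your proof is correct and follows essentially the same approach as the paper: both use the full cyclic shift $\phi(\pi)=(\pi_n,\pi_1,\dots,\pi_{n-1})$ on all $n$ steps, observe that the enclosed area changes by $k$ (or equivalently $-(n-k)$) modulo $n$ at each application, and conclude from $\gcd(n,k)=1$ that each orbit has size $n$ and hits every residue class exactly once. Your write-up is more explicit than the paper's in justifying the orbit length (via the word-periodicity argument) and in computing the area change through inversions; the only quibble is that in your geometric parenthetical ``inserts a leftmost column contributing $k$'' should read ``inserts a bottom row of width $k$,'' but your stated value $k$ and the inversion argument are correct.
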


Just as in Section 3.1 we write a path $\pi$ as a sequence of $n$ coordinate pairs $(\pi_1,\ldots,\pi_n)$ as well. Then we define $\phi$ by setting $\phi(\pi)=\rho$, where 

\[\rho_i = \begin{cases}
\pi_{n} & i=1, \\
\pi_{i-1} & 2\leq i \leq n .
\end{cases}
\]

For example, take the pair $(n,k)=(8,5)$. Then the action of the map $\phi$ on a path $\pi$ can be visualized in Figure \ref{ntok}.

\begin{figure}[h]
\begin{center}
    \begin{tikzpicture}[scale=.5]
    \fill[blue!50!olive,opacity=.2](0,0)--(0,1)--(2,1)--(2,2)--(4,2)--(4,3)--(5,3)--(5,0)--cycle;
        \draw[step=1,thin] (0,0) node[anchor=north east] {} grid (5,3)node[anchor=south west] {};
        \draw[ultra thick,red](0,0)-- (0,1) --(2,1)--(2,2)--(4,2)--(4,3)--(5,3);
        
        \begin{scope}[xshift=9cm]
        \fill[blue!50!olive,opacity=.2](1,0)--(1,1)--(3,1)--(3,2)--(5,2)--(5,0)--cycle;
        \draw[step=1,thin] (0,0) node[anchor=north east] {} grid (5,3)node[anchor=south west] {};
        \draw[ultra thick,red] (0,0)--(1,0) --(1,1)--(3,1)--(3,2)--(5,2)--(5,3);
        \end{scope}
        \draw[->] (6,1.5)--(8,1.5);
        
        \node[anchor=south] at (7,1.5){$\phi$};
    \end{tikzpicture}
\end{center}
\caption{The action of the map $\phi$ on one of the ${8\choose 5}=56$ northeastern lattice paths from $(0,0)$ to $(8,5)$.}
\label{ntok}
\end{figure}
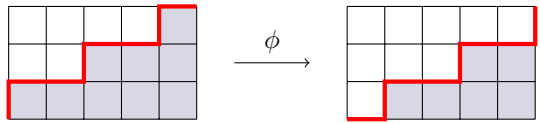

Notice that every time $\phi$ is applied to a path $\pi$ the area enclosed by $\pi$ is either increased by $k$ or reduced by $n-k$. But modulo $n$ this is increasing the enclosed area by $k$, and since $k$ is relatively prime to $n$ the map $\phi$ has period $n$. Thus we are partitioning the set of all ${n\choose k}$ paths into sets of $n$ paths, with each path in a set enclosing a different area modulo $n$.

\begin{rmk}
It should be mentioned that this result also follows from Lemma 1 and the fact that ${n\brack k}_q$ can be shown to be divisible by $[n]_q$ when $n$ and $k$ are relatively prime.
\end{rmk}

Now we observe that Theorem 1 is in some sense a special case of this more general observation. Since \[{2n \choose n} = 2\cdot {2n-1\choose n}\] and $2n-1$ and $n$ are relatively prime, Theorem 1 immediately follows.

\subsection{Integer products}

We return to looking at increasing integer sequences $1 \leq a_{1} < \ldots < a_{k} \leq n$. When $n$ and $k$ are relatively prime the sequences will be equally distributed according to their sum modulo $n$. Under certain conditions a similar statement can be made about the distribution of these sequences by their products modulo $n$.

\begin{thm}
    Let $p$ be a prime and $\ell$ be a number between $1$ and $p-1$ that is relatively prime to $p-1$. For $1\leq k \leq p-1$ the number of increasing integer sequences of length $\ell$ bounded by $1$ and $p-1$ that have product congruent to $k$ modulo $p$ is independent of $k$, and equal to $\frac{1}{p-1}{p-1\choose \ell}$.
\end{thm}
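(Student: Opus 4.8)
The plan is to mimic the combinatorial/group-theoretic strategy of Theorem 3, but replace the additive structure $(\mathbb{Z}/(2n-1), +)$ with the multiplicative group $(\mathbb{Z}/p)^\times$, which is cyclic of order $p-1$ precisely because $p$ is prime. Fix a generator $g$ of $(\mathbb{Z}/p)^\times$ and the discrete logarithm $\log_g: (\mathbb{Z}/p)^\times \to \mathbb{Z}/(p-1)$, which is a group isomorphism. Under this isomorphism, ``product of the $a_i$ modulo $p$'' becomes ``sum of $\log_g a_i$ modulo $p-1$'', and the set $\{1, 2, \ldots, p-1\}$ is carried bijectively onto $\mathbb{Z}/(p-1)$. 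So I would first translate the problem: an increasing sequence $1 \leq a_1 < \cdots < a_\ell \leq p-1$ corresponds to an $\ell$-element \emph{subset} $\{a_1, \ldots, a_\ell\}$ of $(\mathbb{Z}/p)^\times$, and I want to show these $\binom{p-1}{\ell}$ subsets are equidistributed according to $\sum_i \log_g a_i \bmod (p-1)$.

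The key step is to exhibit a map on $\ell$-subsets of $(\mathbb{Z}/p)^\times$ that has period $p-1$ and shifts the statistic $\sum \log_g a_i$ by a fixed unit of $\mathbb{Z}/(p-1)$ each time it is applied. The natural candidate is multiplication by $g$: send $\{a_1, \ldots, a_\ell\}$ to $\{g a_1, \ldots, g a_\ell\}$ (products taken mod $p$). This is a bijection on $\ell$-subsets of $(\mathbb{Z}/p)^\times$ (it's induced by a permutation of the underlying set), and applying it $p-1$ times returns each subset to itself since $g^{p-1} = 1$; moreover its order on any subset is exactly $p-1$ unless the subset is fixed by some $g^j$ with $0 < j < p-1$. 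Under this map $\sum_i \log_g(g a_i) = \sum_i (1 + \log_g a_i) = \ell + \sum_i \log_g a_i \pmod{p-1}$, so the statistic advances by $\ell$, which is a unit mod $p-1$ by hypothesis. Since a unit generates $\mathbb{Z}/(p-1)$, no subset can be fixed by a proper power $g^j$ (such a fixed subset would force its statistic to satisfy $j\ell \equiv 0$, but more carefully: the orbit has size equal to the order of $g^\ell$ acting, which equals $(p-1)/\gcd(\ell, p-1) = p-1$), so every orbit has size exactly $p-1$ and within each orbit all $p-1$ residues of the statistic occur exactly once. Hence each residue class occurs $\frac{1}{p-1}\binom{p-1}{\ell}$ times.

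One subtlety I would be careful about: the map ``multiply by $g$'' is not literally the map $\phi'$ of Section 3.1 written out coordinate-by-coordinate on increasing sequences — after multiplying each $a_i$ by $g$ and reducing mod $p$, one must re-sort the resulting set into increasing order. This is fine because the statistic (the product, hence the sum of logs) is symmetric in the entries, so re-sorting does not change it; I should just note this once rather than write the permutation explicitly. The \textbf{main obstacle} — really the only place the hypotheses are used essentially — is confirming that every orbit has \emph{full} size $p-1$ rather than a proper divisor; this is where $p$ prime (so that $(\mathbb{Z}/p)^\times$ is cyclic of order $p-1$) and $\gcd(\ell, p-1) = 1$ both enter, and I would phrase it exactly as in the earlier proofs: the statistic increases by the unit $\ell$ at each step, so it cannot return to its starting value before $p-1$ steps, forcing the period to be $p-1$. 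A closing remark worth including is that this is genuinely a multiplicative analogue and the primality of $p$ (not merely $\ell$ coprime to $p-1$) is what makes the discrete logarithm available.
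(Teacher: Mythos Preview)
Your proposal is correct and takes essentially the same approach as the paper: both use the discrete-logarithm isomorphism $(\mathbb{Z}/p)^\times \cong \mathbb{Z}/(p-1)$ to convert products modulo $p$ into sums modulo $p-1$, at which point the additive equidistribution result (for $\ell$-subsets with $\gcd(\ell,p-1)=1$) applies. The paper simply cites the sum result as a black box, while you reprove it inline via the multiply-by-$g$ orbit map---but that map is precisely the image, under the isomorphism, of the additive shift used earlier, so the arguments are the same.
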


The proof of this claim follows from the analogous result for integer sums and the fact that the unit group of the integers modulo $p$ is cyclic and isomorphic to the group of integers modulo $p-1$.

Let $1\leq a_1 < \ldots < a_{\ell} \leq p-1$ be an increasing integer sequence. Because the unit group of integers modulo $p$ is cyclic, there is a number $a$ such that $a_i = a^{j_i}$ for each $i$. Furthermore each $j_i$ is distinct and bounded between $0$ and $p-1$. Because $\ell$ is relatively prime to $p-1$, the sums of the $j_i$ are equally distributed in the residue classes modulo $p-1$. Therefore the $\frac{1}{p-1}{p-1\choose \ell}$ sums $\sum j_i$ congruent to $k^*$ modulo $p-1$ correspond to the same number of products $\prod a_i$ congruent to $k=a^{k^*}$ modulo $p-1$. 

\subsection{When $n$ and $k$ are not relatively prime}

While the condition that $n$ and $k$ be relatively prime is sufficient to conclude that $n\mid {n\choose k}$, it is not necessary. The first example of this is when $n=10$ and $k=4$. Clearly 10 and 4 are not relatively prime, however $10\mid {10\choose 4}=210$. We then ask the question how the 210 northeastern lattice paths from $(0,0)$ to $(6,4)$ are distributed by their enclosed area modulo 10.

\begin{table}[h]
    \begin{center}
    \begin{tabular}{c|c||c|c}
         Enclosed area modulo 10&\# of paths&Enclosed area modulo 10&\# of paths  \\
         \hline 
         0&22&
         5&20\\
         1&20&
         6&22\\
         2&22&
         7&20\\
         3&20&
         8&22\\
         4&22&
         9&20
    \end{tabular}
    \end{center}
    \caption{The distribution of northeastern lattice paths from $(0,0)$ to $(6,4)$ by enclosed area modulo 10.}
    \label{10tab}
\end{table}

As seen in Table \ref{10tab}, the paths do not fall evenly into residue classes modulo 10, but they do fall evenly into residue classes modulo 5. We can give a quick combinatorial reason for why this is the case. When $n$ and $k$ were relatively prime, since a single application of $\phi$ on a path from $(0,0)$ to $(k,n-k)$ increased the enclosed area by $k$ modulo $n$, repeated application of $\phi$ gave us a set of paths distributed evenly among all residue classes modulo $n$. Now if $g>1$ is the greatest common divisor of $n$ and $k$, a single application of $\phi$ on a path still increases the area by $k$ modulo $n$; however, after repeated application we end up with a set of paths distributed evenly amongst all residue classes modulo $n/g$. We state this as a theorem.

\begin{thm}
     Let $g$ be the greatest common divisor of $n$ and $k$, where $0<k<n$. Then for any $0\leq j < g$, the number of northeastern lattice paths from $(0,0)$ to $(k,n-k)$ enclosing an area of $j$ modulo $g$ is independent of $j$, and equal to \[\frac{g}{n}{n\choose k}.\]
\end{thm}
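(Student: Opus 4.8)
The plan is to reuse the rotation map $\phi$ from the proof of Theorem 4, but first I would pin down which modulus is really at issue. The asserted count $\frac{g}{n}{n\choose k}$ is exactly ${n\choose k}$ divided by $n/g$, so the statement is describing a partition of the ${n\choose k}$ paths into $n/g$ equinumerous classes; this is consistent with the discussion preceding the theorem and with Table \ref{10tab}, where the paths fall evenly into the $n/g=5$ classes rather than the $g=2$ classes. I would therefore read the governing modulus as $n/g$ and aim to prove that the number of paths from $(0,0)$ to $(k,n-k)$ with enclosed area congruent to $j$ modulo $n/g$ equals $\frac{g}{n}{n\choose k}$ for every $j$.

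Writing each path as its sequence of $n$ unit steps, let $\phi$ send the last step to the front and shift the rest, exactly as in Theorem 4. The same inversion count used in Section 3.1 shows that $\phi$ raises the area by $k$ when the moved step is vertical and lowers it by $n-k$ when it is horizontal, so the area always changes by $k$ modulo $n$. Since $\phi^{n}=\mathrm{id}$, the orbits of $\phi$ partition the paths, and along any orbit the areas form an arithmetic progression with common difference $k$ modulo $n$. Because $\gcd(k,n)=g$, the partial increments $jk$ are trapped in the subgroup $g\mathbb{Z}/n\mathbb{Z}$, so each orbit keeps its area in a single residue class modulo $g$ while sweeping the $n/g$ residues of that class modulo $n$. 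The key step is then to reduce this progression modulo $n/g$ and check that every orbit meets the $n/g$ classes modulo $n/g$ the same number of times; summing over orbits would then give the uniform count $\frac{g}{n}{n\choose k}$.

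I expect the main obstacle to be precisely this balancing step. When $\gcd(g,n/g)=1$ (as in the worked example $n=10$, $k=6$) the residue $k$ generates $\mathbb{Z}/(n/g)\mathbb{Z}$, each orbit is perfectly balanced modulo $n/g$, and the argument closes. When $\gcd(g,n/g)>1$, however, $k$ generates only a proper subgroup of $\mathbb{Z}/(n/g)\mathbb{Z}$, a single orbit misses some residue classes, and the rotation argument alone no longer forces equidistribution modulo $n/g$. To settle this case I would pass to the $q$-analogue framework of Section 2: by Lemma 1 the claimed equidistribution modulo $n/g$ is equivalent to the divisibility $[n/g]_q\mid{n\brack k}_q$, which I would examine through the values of ${n\brack k}_q$ at the $n$th roots of unity using the standard evaluation of Gaussian binomial coefficients there. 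This is the step I would treat most carefully, since it is here that one sees whether the clean uniform count survives; the fact that is robust in all cases is that the number of paths whose area is congruent to a given value modulo $n$ depends only on that value modulo $g$.
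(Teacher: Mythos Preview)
Your rotation argument via $\phi$ is exactly the paper's: the paragraph preceding the theorem is the entire proof offered, and it makes the same observation that each application of $\phi$ shifts the enclosed area by $k$ modulo $n$, whence (the paper asserts) an orbit is spread evenly over the residue classes modulo $n/g$. You have also correctly read the intended modulus as $n/g$ rather than the printed $g$.

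Your hesitation about the case $\gcd(g,n/g)>1$ is not a loose end to be tidied later---it exposes a genuine error in the paper. The theorem (with the corrected modulus) is false there. Take $n=4$, $k=2$: then $g=n/g=2$, the six paths from $(0,0)$ to $(2,2)$ have areas $0,1,2,2,3,4$, and one gets four even and two odd rather than the $3$--$3$ split predicted by $\frac{g}{n}\binom{n}{k}=3$; equivalently ${4\brack 2}_{q}$ evaluated at $q=-1$ is $2\neq 0$, so $[2]_q\nmid{4\brack 2}_q$ and Lemma~1 blocks the conclusion. Your diagnosis is exactly right: when $\gcd(g,n/g)>1$ the increment $k$ generates only a proper subgroup of $\mathbb{Z}/(n/g)\mathbb{Z}$, each $\phi$-orbit sits in a single coset of that subgroup, and nothing forces different orbits to balance. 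The paper's worked example ($n=10$, $k=4$) has $\gcd(2,5)=1$ and so dodges the obstruction. In short, your plan matches the paper's argument, your caveat is sharper than anything the paper states, and the result as written requires the additional hypothesis $\gcd(g,n/g)=1$ to hold.
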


\begin{rmk}

This theorem has an interesting application to polynomial divisors of the Gaussian binomial coefficients. It is tempting to assume that if $d\mid {n\choose k}$ then it must be the case that $[d]_q\mid {n\brack k}_q$ as elements of $\mathbb{Z}[q]$. But by our observation regarding the northeastern lattice paths from $(0,0)$ to $(6,4)$ and Lemma 1, we see that this is false. What we are able to say is that if $n\mid {n\choose k}$ and $g$ is the greatest common divisor of $n$ and $k$ then $[n/g]_q\mid {n\brack k}_q$ as elements of $\mathbb{Z}[q]$.

\end{rmk}

\subsection{A note on the Catalan numbers}

Our motivation for this study was the Catalan numbers, for which the Chung--Feller theorem gives a combinatorial description for the divisibility of ${2n\choose n}$ by $n+1$. We now show that our map $\phi$ described in the combinatorial proof of Theorem 1 in Section 3.1 can be adapted to give a seemingly new combinatorial explanation for why the Catalan numbers are integers.

\begin{thm}
Let $n\geq 1$ and $0\leq j \leq n$. Then the number of northeastern lattice paths from $(0,0)$ to $(n,n)$ enclosing an area congruent to $j$ modulo $n+1$ is equal to $C_n = \frac{1}{n+1}{2n\choose n}$, which in particular is independent from $j$.
\end{thm}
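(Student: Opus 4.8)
The plan is to run the same style of argument as the combinatorial proof of Theorem 1 in Section 3.1, but to set it up on the \emph{column encoding} of a path rather than on its step sequence. By the remark following Theorem 3, a northeastern lattice path $\pi$ from $(0,0)$ to $(n,n)$ is the same data as a weakly increasing sequence $0\le a_1\le a_2\le\cdots\le a_n\le n$, where $a_i$ is the height of the $i$th east step of $\pi$ (equivalently the enclosed area contributed by the $i$th column), and the total enclosed area of $\pi$ equals $a_1+a_2+\cdots+a_n$. I would view such a sequence as an $n$-element multiset $M$ with entries in $\mathbb{Z}/(n+1)=\{0,1,\ldots,n\}$; these are in bijection with the paths, so there are ${2n\choose n}$ of them, and the enclosed-area statistic becomes $\sum_{x\in M}x$, read modulo $n+1$.

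The adaptation of $\phi$ I would use is simply translation in $\mathbb{Z}/(n+1)$: set $\tau(M)=\{\,x+1 : x\in M\,\}$, with all arithmetic modulo $n+1$. Concretely this adds $1$ to every column height, replaces any height equal to $n$ by $0$, and re-sorts; unlike the map of Section 3.1 it needs no splitting into the cases $P_0(n)$ and $P_1(n)$. Two facts are immediate: $\tau$ is a bijection of the set of $n$-element multisets of $\mathbb{Z}/(n+1)$ with $\tau^{n+1}=\mathrm{id}$, and $\sum_{x\in\tau(M)}x\equiv\bigl(\sum_{x\in M}x\bigr)+n\pmod{n+1}$, so each application of $\tau$ shifts the enclosed area by $n\equiv-1$, an amount relatively prime to $n+1$ --- exactly the feature that drove the proof of Theorem 1. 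Consequently, once we know that every $\tau$-orbit has size exactly $n+1$, the $n+1$ paths in an orbit represent the $n+1$ residue classes of the area modulo $n+1$ each exactly once, and the number of paths in each class is ${2n\choose n}/(n+1)=C_n$.

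The one real obstacle is the orbit-size claim, and this is precisely the point where the arithmetic of $n$ and $n+1$ enters, just as $\gcd(2n-1,n-1)=1$ did in Section 3.1. Suppose some multiset $M$ with $|M|=n$ were fixed by $\tau^d$ for a divisor $d$ of $n+1$ with $0<d<n+1$, and let $H=\langle d\rangle$ be the corresponding nontrivial subgroup of $\mathbb{Z}/(n+1)$. Then $\tau^{d}M=M$ forces the multiplicity function of $M$ to be constant on each coset of $H$, so $|H|$ divides $|M|=n$; but $|H|$ also divides $n+1$, hence $|H|$ divides $\gcd(n,n+1)=1$, a contradiction. Thus no proper power of $\tau$ fixes any $n$-element multiset, every orbit has full size $n+1$, and the theorem follows. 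As a consistency check one can confirm $n=1$, where the two paths of areas $0$ and $1$ form one orbit of size $2$, and $n=3$, where the ${6\choose 3}=20$ paths split into five orbits of size $4$, matching $C_3=5$; and the statement can alternatively be read off from Lemma 1 together with the polynomial identity \eqref{catd}, which exhibits ${2n\brack n}_q-q{2n\brack n+1}_q\in\mathbb{Z}[q]$ as the quotient of ${2n\brack n}_q$ by $[n+1]_q$.
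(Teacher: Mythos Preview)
Your argument is correct, and it is genuinely different from the paper's. The paper proves Theorem 7 by passing to the $(n+1)\times n$ grid: it applies the step-cycling map $\phi$ of Section 4.1 to paths from $(0,0)$ to $(n+1,n)$, obtains orbits of size $2n+1$, observes that exactly $n+1$ paths in each orbit pass through $(n,n)$, and then computes that the induced map on those $n+1$ paths changes the $(n,n)$-area by $(n+1)(j-1)-n\equiv 1\pmod{n+1}$. Your route avoids this detour entirely: by encoding a path as an $n$-element multiset in $\mathbb{Z}/(n+1)$ and letting $\mathbb{Z}/(n+1)$ act by translation, you get a clean orbit--stabilizer argument in which the crucial point---that no $n$-element multiset can be translation-invariant under a nontrivial subgroup---falls out immediately from $\gcd(n,n+1)=1$. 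The paper's approach has the virtue of reusing the same map $\phi$ throughout and tying the $n+1$ divisor to the $2n+1$ divisor via the identity $C_n=\frac{1}{2n+1}\binom{2n+1}{n}$; your approach is shorter and makes the group structure more transparent, at the cost of introducing a new map $\tau$ that does not obviously specialize the earlier $\phi$.
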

We begin by considering the ${2n+1\choose n}$ northeastern lattice paths from $(0,0)$ to $(n+1,n)$. The identity illuminating this approach is the alternative definition of the Catalan numbers \[C_n = \frac{1}{n+1}{2n\choose n} = \frac{1}{2n+1}{2n+1\choose n}.\]
Now by applying our map $\phi$ described in Section 3.1 to these paths we partition the set of paths into $C_n$ equal parts, each of size $2n+1$. In each one of these parts there is exactly one path enclosing an area congruent to $k$ modulo $2n+1$ for each $0\leq k\leq 2n$. Of these $2n+1$ paths, in one of the parts of the partition exactly $n+1$ can also be viewed as northeastern lattice paths from $(0,0)$ to $(n,n)$---as shown in Figure \ref{wow}.

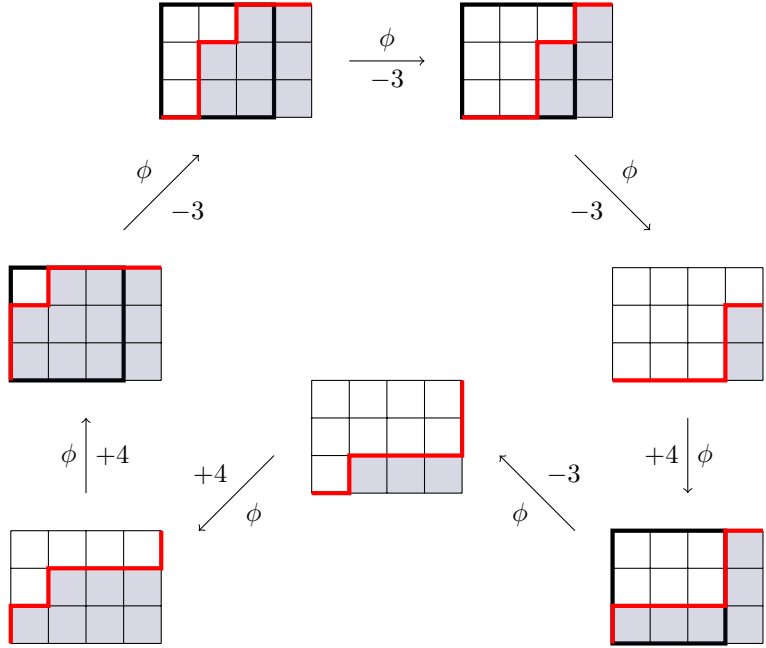
\begin{figure}[h]
\begin{center}
    \begin{tikzpicture}[scale=.5]
    \draw[ultra thick] (0,0)--(3,0)--(3,3)--(0,3)--cycle;
    \fill[blue!50!olive,opacity=.2](1,0)--(1,2)--(2,2)--(2,3)--(4,3)--(4,0)--cycle;
        \draw[step=1,thin] (0,0) node[anchor=north east] {} grid (4,3)node[anchor=south west] {};
        \draw[ultra thick,red] (0,0)--(1,0) --(1,2)--(2,2)--(2,3)--(4,3);
        
        \begin{scope}[xshift=8cm]
        \draw[ultra thick] (0,0)--(3,0)--(3,3)--(0,3)--cycle;
        \fill[blue!50!olive,opacity=.2](2,0)--(2,2)--(3,2)--(3,3)--(4,3)--(4,0)--cycle;
        \draw[step=1,thin] (0,0) node[anchor=north east] {} grid (4,3)node[anchor=south west] {};
        \draw[ultra thick,red] (0,0)--(2,0) --(2,2)--(3,2)--(3,3)--(4,3);
        \end{scope}
        
        \begin{scope}[xshift=12cm,yshift=-7cm]
        \fill[blue!50!olive,opacity=.2](3,0)--(3,2)--(4,2)--(4,0)--cycle;
        \draw[step=1,thin] (0,0) node[anchor=north east] {} grid (4,3)node[anchor=south west] {};
        \draw[ultra thick,red] (0,0)--(3,0) --(3,2)--(4,2);
        \end{scope}
        
        \begin{scope}[xshift=12cm,yshift=-14cm]
        \draw[ultra thick] (0,0)--(3,0)--(3,3)--(0,3)--cycle;
        \fill[blue!50!olive,opacity=.2](0,0)--(0,1)--(3,1)--(3,3)--(4,3)--(4,0)--cycle;
        \draw[step=1,thin] (0,0) node[anchor=north east] {} grid (4,3)node[anchor=south west] {};
        \draw[ultra thick,red] (0,0)--(0,1) --(3,1)--(3,3)--(4,3);
        \end{scope}
        
        \begin{scope}[xshift=4cm,yshift=-10cm]
        \fill[blue!50!olive,opacity=.2](1,0)--(1,1)--(4,1)--(4,0)--cycle;
        \draw[step=1,thin] (0,0) node[anchor=north east] {} grid (4,3)node[anchor=south west] {};
        \draw[ultra thick,red] (0,0)--(1,0) --(1,1)--(4,1)--(4,3);
        \end{scope}
        
        \begin{scope}[xshift=-4cm,yshift=-14cm]
        \fill[blue!50!olive,opacity=.2](0,0)--(0,1)--(1,1)--(1,2)--(4,2)--(4,0)--cycle;
        \draw[step=1,thin] (0,0) node[anchor=north east] {} grid (4,3)node[anchor=south west] {};
        \draw[ultra thick,red] (0,0)--(0,1) --(1,1)--(1,2)--(4,2)--(4,3);
        \end{scope}
        
        \begin{scope}[xshift=-4cm,yshift=-7cm]
        \draw[ultra thick] (0,0)--(3,0)--(3,3)--(0,3)--cycle;
        \fill[blue!50!olive,opacity=.2](0,0)--(0,2)--(1,2)--(1,3)--(4,3)--(4,0)--cycle;
        \draw[step=1,thin] (0,0) node[anchor=north east] {} grid (4,3)node[anchor=south west] {};

        \draw[ultra thick,red] (0,0)--(0,2) --(1,2)--(1,3)--(4,3);
        \end{scope}

        \draw[->] (5,1.5)--(7,1.5);
        \node[anchor=south] at (6,1.5){$\phi$};
        \node[anchor=north] at (6,1.5){$-3$};
        
        \draw[->] (11,-1)--(13,-3);
        \node[anchor=south west] at (12,-2){$\phi$};
        \node[anchor=north east] at (12,-2){$-3$};
        
        \draw[->] (14,-8)--(14,-10);
        \node[anchor=west] at (14,-9){$\phi$};
        \node[anchor=east] at (14,-9){$+4$};
        
        \draw[->] (11,-11)--(9,-9);
        \node[anchor=north east] at (10,-10){$\phi$};
        \node[anchor=south west] at (10,-10){$-3$};
        
        \draw[->] (3,-9)--(1,-11);
        \node[anchor=north west] at (2,-10){$\phi$};
        \node[anchor=south east] at (2,-10){$+4$};
        
        \draw[->] (-2,-10)--(-2,-8);
        \node[anchor=east] at (-2,-9){$\phi$};
        \node[anchor=west] at (-2,-9){$+4$};
        
        \draw[->] (-1,-3)--(1,-1);
        \node[anchor=south east] at (0,-2){$\phi$};
        \node[anchor=north west] at (0,-2){$-3$};

    \end{tikzpicture}
\end{center}
\caption{The action of $\phi$ on a northeastern lattice path from $(0,0)$ to $(4,3)$, corresponding to $n=3$. There are $2n+1=7$ paths in this cycle, however only $n+1=4$ of these paths pass through the point $(3,3)$.}
\label{wow}
\end{figure}

Thus the map $\phi$ can be restricted to the $n+1$ northeastern lattice paths from $(0,0)$ to $(n,n)$ by simply applying $\phi$ as many times as necessary. This restriction of $\phi$ clearly has period $n+1$, and each of these paths encloses a different area modulo $2n+1$. We now show that in fact each of these $n+1$ paths has a different area modulo $n+1$, so that we have a combinatorial description of the analogue of Theorem 1 for the divisor $n+1$ of ${2n\choose n}$.\footnote{A simple application of Lemma 1 and equation \ref{catd} also gives a proof of this fact.}

Starting with a path $\pi$ from $(0,0)$ to $(n+1,n)$ enclosing an area equal to $k$ that can be viewed as a path from $(0,0)$ to $(n,n)$, the path from $(0,0)$ to $(n,n)$ encloses an area of $k-n$. Let $j$ be the smallest positive integer such that $\phi^{(j)}(\pi)$ can be viewed as a path from $(0,0)$ to $(n,n)$. Now, applying $\phi$ $j$ times changes the area of the path from $(0,0)$ to $(n+1,n)$ by $(n+1)(j-1)-n$, since the path must first shift right one step and then shift up $j-1$ steps. Therefore, the new path from $(0,0)$ to $(n,n)$ encloses an area of $k+(n+1)(j-1)-2n$, which is a change of $(n+1)(j-1)-n$. Since $(n+1)(j-1)-n$ is congruent to 1 modulo $n+1$, this establishes the result.

\section*{Acknowledgments}
The first author (M.J.) was partially supported by the Research and Training Group grant DMS-1344994 funded by the National Science Foundation. We thank Robert Schneider, Elise Marchessault, and the referee for helpful comments.

\end{document}